\documentclass[11pt,reqno,a4paper]{amsart}
\usepackage{mathrsfs}

\usepackage{amssymb,amsmath,amsfonts,amsthm,mathtools,mathabx}
\usepackage{breqn}

\tolerance 1000
\textwidth=6.5in
\oddsidemargin=0in
\evensidemargin=0in
\topmargin=0in

\newtheorem{theorem}{Theorem}[section]
\newtheorem{lemma}[theorem]{Lemma}
\newtheorem{propos}[theorem]{Proposition}

\newtheorem{defin}[theorem]{Definition}
\newtheorem{rmk}[theorem]{Remark}

\numberwithin{equation}{section}

\allowdisplaybreaks

\def\mint
   {\mkern12mu\hbox{\vrule height4pt
   depth-3.2pt
    width5pt}%
   \mkern-16.5mu\int}

\DeclareMathOperator*{\Conv}{\mathop{\scalebox{1.5}{\raisebox{-0.2ex}{$\ast$}}}}

 \newcommand{\loc}{ \rm{loc}}
\newcommand{\da}{{\delta}_1}
\newcommand{\db}{{\delta}_2}

\newcommand{\La}{\mathcal{L}}

\newcommand{\var}{\varepsilon} 
\newcommand{\varp}{\varphi}

\newcommand{\om}{\Omega} 
\newcommand{\p}{\partial}

\newcommand{\la}{\lambda} 
 
\newcommand{\IR}{\mathbb{R}}

\newcommand{\irn}{\IR^N}
\newcommand{\ird}{\IR^d}
\newcommand{\wbar}{\widebar}

\newcommand{\x}{x_1}
\newcommand{\y}{x_2}

\newcommand{\ID}{\mathcal{D}}

\newcommand{\IF}{\mathcal{F}}
\newcommand{\IM}{\mathcal{M}}
\newcommand{\IV}{\mathcal{V}}

\newcommand{\IS}{\mathcal{S}}

\newcommand{\IU}{\mathcal{U}}

\newcommand{\pp}{{\mathfrak p}}
\newcommand{\qq}{{\mathfrak q}}
\newcommand{\rr}{{\mathfrak r}}
\newcommand{\mm}{{\mathfrak m}}

\subjclass[2010]{34A12, 35F25, 35F10.}

\keywords{Ordinary differential equations with non smooth vector fields; continuity equation; transport equation; regu\-lar Lagrangian flow; maximal functions; singular integrals; anisotropic regularity; Vlasov-Poisson equation.}

\begin{document}

\bibliographystyle{plain}

\title[Lagrangian flows for vector fields with anisotropic regularity]
{Lagrangian flows for vector fields \\ with anisotropic regularity}

\author[A.~Bohun]{Anna Bohun}
\address{Anna Bohun, Departement Mathematik und Informatik, Universit\"at Basel, 
Rheinsprung 21, CH-4051, Basel, Switzerland}
\email{anna.bohun@unibas.ch}

\author[F.~Bouchut]{Fran\c{c}ois Bouchut}
\address{Fran\c{c}ois Bouchut, Universit\'e Paris-Est, Laboratoire d'Analyse et de Math\'ematiques Appliqu\'ees (UMR 8050), CNRS, UPEM, UPEC, F-77454, Marne-la-Vall\'ee, France}
\email{francois.bouchut@u-pem.fr} 

\author[G.~Crippa]{Gianluca Crippa}
\address{Gianluca Crippa, Departement Mathematik und Informatik, Universit\"at Basel, 
Rheinsprung 21, CH-4051, Basel, Switzerland}
\email{gianluca.crippa@unibas.ch}

\begin{abstract}
We prove quantitative estimates for flows of vector fields subject to anisotropic regularity conditions: some derivatives of some components are (singular integrals of) measures, while the remaining derivatives are (singular integrals of) integrable functions. This is motivated by the regularity of the vector field in the Vlasov-Poisson equation with measure density. The proof exploits an anisotropic variant of the argument in \cite{crippade1,jhde} and suitable estimates for the difference quotients in such anisotropic context. In contrast to regularization methods, this approach gives quantitative estimates in terms of the given regularity bounds. From such estimates it is possible to recover the well posedness for the ordinary differential equation and for Lagrangian solutions to the continuity and transport equations.  
\end{abstract}

\maketitle

\section{Introduction}

\subsection{Ordinary differential equations with non smooth vector field}

Given a smooth vector field $b : [0,T] \times \IR^N \to \IR^N$, the flow of $b$ is the smooth map $X : [0,T] \times \IR^N \to \IR^N$ satisfying 
$$
\begin{cases} 
\displaystyle \frac{dX}{ds} (s,x) = b(s,X(s,x)) \,, \quad s \in [0,T] \,, \\ \\
X(0,x) = x \,.
\end{cases}
$$
In the last years much attention has been devoted to the study of flows associated to vector fields that are not smooth (in particular, less than Lipschitz in the space variable). In this context, the correct notion of flow is that of {\em regular Lagrangian flow}, loosely speaking an ``almost-everywhere flow which (almost) preserves the Lebesgue measure'' (see Definition~\ref{d:rlf} for the precise definition).

Existence, uniqueness and stability of the regular Lagrangian flow have been proven by DiPerna and Lions \cite{lions} for Sobolev vector fields, and by Ambrosio \cite{ambrosio} for vector fields with bounded variation, in both cases under suitable bounds on the divergence of the vector field. Both results make use of the connection with the well posedness of the continuity equation
$$
\partial_t u + {\rm div}\, (bu)=0 \,,
$$
which in turn is analyzed thanks to the theory of renormalized solutions. We address the interested reader to \cite{bologna,amcri-edi,tesi,D} for a detailed presentation of these results and for further references. 

\subsection{Quantitative estimates for the ordinary differential equation}

An alternative and more direct approach has been introduced in \cite{crippade1}. Many of the ODE results in \cite{lions} can be derived with simple a priori estimates, directly at Lagrangian level, by studying a functional measuring an ``integral logarithmic distance'' between flows.

In detail, given two regular Lagrangian flows $X$ and $\bar X$ associated to a vector field $b$, the idea is to consider the functional
\begin{equation}\label{e:functional}
\Phi_\delta(s) = \int \log \left( 1 + \frac{|X(s,x)-\bar X(s,x)|}{\delta} \right) \, dx \,,
\end{equation}
where $\delta>0$ is a given parameter (which will be optimized in the course of the proof) and the integration is performed on a suitable compact set.

It is immediate to derive the following lower estimate, for a given $\gamma>0$:
$$
\Phi_\delta(s) \geq \int_{\{ |X-\bar X| \geq \gamma \}} \log \left( 1 + \frac{\gamma}{\delta} \right) \, dx
= \La^N ( \{ |X-\bar X| \geq \gamma \} ) \log \left( 1 + \frac{\gamma}{\delta} \right) \,,
$$
that is, the measure of the superlevels of the difference between two regular Lagrangian flows is upper estimated by 
\begin{equation}\label{criterion}
\frac{\Phi_\delta(s)}{\log \left( 1 + \frac{\gamma}{\delta} \right)}\,.
\end{equation} 
A strategy for proving uniqueness if therefore deriving upper bounds on the functional $\Phi_\delta(s)$ which blow up in $\delta$ slower than $\log \left( 1 / \delta \right)$ as $\delta \to 0$.

Differentiating in time the functional and using the ordinary differential equation we obtain
\begin{equation}\label{differentiating}
\Phi'_\delta(s) \leq \int \frac{| b(X) - b(\bar X) |}{ \delta + | X - \bar X |} \, dx 
\leq \int \min \left\{ \frac{ 2 \| b \|_\infty}{\delta} \; ; \; \frac{| b(X) - b(\bar X) |}{ | X - \bar X |} \right\} \, dx \,.
\end{equation}
In \cite{crippade1} it has been noted that the estimate of the difference quotients in terms of the maximal function
$$
\frac{| b(X) - b(\bar X) |}{ | X - \bar X |} \lesssim  MDb (X) + MDb ( \bar{X} ) \,,
$$
together with the {\em strong} estimate for the maximal function \eqref{ee:strong}, imply an upper bound on $\Phi_\delta(s)$ independent of $\delta$. This allowed in \cite{crippade1} the proof of existence, uniqueness, stability (with an effective rate), compactness, and mild regularity for the regular Lagrangian flow associated to a vector field with Sobolev regularity $W^{1,p}$, with $p>1$. We note in passing that the rate obtained in these estimates has been recently proven to be sharp (see \cite{cras}).

\medskip

The case $p=1$ (and the more general case of vector fields with bounded variation) was left open in the above analysis due to the failure of the strong estimate \eqref{ee:strong}: only the weak estimate \eqref{ee:weak} is available for $p=1$. This case has been studied in \cite{jhde} exploiting interpolation techniques in weak Lebesgue spaces. The weak estimate on the second term in the minimum in \eqref{differentiating} is interpolated with the (degenerating in $\delta$) $L^\infty$ estimate on the first term in the minimum. This gives an upper bound of the form
\begin{equation}\label{logscale}
\Phi_\delta(s) \lesssim \| Db \|_{L^1} \, \log \left( \frac{1}{\delta} \right) \,.
\end{equation}
This estimate is on the critical scale discriminating uniqueness. Therefore we have to play with constants: up to an $L^2$-remainder, the $L^1$-norm of $Db$ can be assumed to be arbitrarily small (we exploit here {\em equi-integrability} bounds on $Db$). This allows to re-gain smallness in \eqref{criterion} (notice that the $L^2$ part can be treated as in \cite{crippade1}).

For this reason the analysis in \cite{jhde} is not able to address the case when $Db$ is a measure (i.e., the case of a vector field with bounded variation). On the other hand, by considering {\em smooth maximal functions} instead of classical ones, and by exploiting more sophisticated tools from harmonic analysis, the case in which $Db$ is a {\em singular integral} of an $L^1$ functional can be treated with the same strategy. This extends the case $b \in W^{1,1}$ and is relevant for some applications to nonlinear PDEs (see \cite{vp,euler}). Results of existence, uniqueness, stability (with an effective rate), and compactness follows as in \cite{crippade1}. We refer to \cite{amcri-edi} and to the introduction of \cite{jhde} for a more detailed presentation.

\subsection{A split case and the main result of the present paper}

As mentioned above, the analysis in \cite{jhde} is not able to include the case when $Db$ is a measure (or a singular integral of a measure). However, in situations originating from models in mathematical physics, the vector field is endowed with a particular structure, and just {\em some} of the derivatives are singular integrals of measures, while the remaining derivatives are more regular.

For instance, the Vlasov-Poisson equation
$$
\begin{cases}
\partial_t f + v \cdot \nabla_x f + E(t,x) \cdot \nabla_v f = 0 \,, \\
E(t,x) = - \gamma \nabla_x U(t,x) \,, \quad - \Delta_x U(t,x) = \rho(t,x) = \int f(t,x,v) \, dv 
\end{cases}
$$
is a (nonlinear) transport equation with vector field $b(t,x,v) = (v,E(t,x))$. If we look at the case when the space density $\rho$ is a measure, it turns out that $D_x E$ is a singular integral of a measure, while all other derivatives of the vector field enjoy better regularity. However, we are not able to consider the case of $f$ a measure in $x,v$, that has been studied in \cite{ZM,MMZ}, since the characteristics are defined only almost everywhere.

This motivates the setting of the present paper. We we write $\IR^N = \IR^{n_1} \times \IR^{n_2}$ with coordinates $x_1$ and $x_2$, and split analogously the vector field according to $b = (b_1,b_2)$. Roughly speaking, we consider the case in which $D_1 b_2$ is a singular integral (in $\IR^{n_1}$) of a measure, while $D_1 b_1$, $D_2 b_1$ and $D_2 b_2$ are singular integrals (in $\IR^{n_1}$) of integrable functions:
$$
Db  =
\left( \begin{matrix} 
   S \ast L^1 & S \ast L^1\\
   S \ast \IM & S \ast L^1 
\end{matrix} \right)
$$ 
(in fact our assumptions are slightly more general: see assumption {\bf (R2)} in Section~\ref{s:reg}). Compared to \cite{jhde}, we are able to consider a situation in which some entries of the differential matrix $Db$ are measures. (From a PDE point of view, related contexts have been considered in \cite{lebris,lerner}).

The idea, analogous to the anisotropic regularization of \cite{bouarma,ambrosio}, is to ``weight'' differently the two (groups of) directions, according to the different degrees of regularity. In our context, this can be done by considering, instead of \eqref{e:functional}, a functional depending on {\em two} parameters $\delta_1$ and $\delta_2$, with $\delta_1 \leq \delta_2$, namely
\begin{equation}\label{anfunc}
\Phi_{\delta_1,\delta_2}(s) = \int \log \left( 1 + \left| \left( \frac{|X_1(s,x)-\bar X_1(s,x)|}{\delta_1} \, , \, 
\frac{|X_2(s,x)-\bar X_2(s,x)|}{\delta_2} \right) \right|  \right) \, dx \,.
\end{equation}
Following the same strategy as before (estimate of the difference quotients and interpolation in the minimum in \eqref{differentiating}), we derive the following bound, which replaces \eqref{logscale} in this context:
$$
\Phi_{\delta_1,\delta_2}(s) 
\lesssim  
\left[ \frac{\delta_1}{\delta_2} \| D_1 b_2 \|_\IM + \frac{\delta_2}{\delta_1} \| D_2 b_1 \|_{L^1}
+ \| D_1 b_1 \|_{L^1} + \| D_2 b_2 \|_{L^1} \right] \log \left( \frac{1}{\delta_2} \right)  \,.
$$
We need to gain some ``smallness'' in criterion \eqref{criterion}. Observe that $\| D_2 b_1 \|_{L^1}$, $\| D_1 b_1 \|_{L^1}$ and $\| D_2 b_2 \|_{L^1}$ can be assumed to be small, by the same equi-integrability argument as in \cite{jhde}. This is however not the case for $\| D_1 b_2 \|_\IM$. But we can exploit the presence of the coefficient $\delta_1 / \delta_2$ multiplying this term: both $\delta_1$ and $\delta_2$ have to be sent to zero, but we can do this with $\delta_1 \ll \delta_2$.

One relevant technical point in the proof is the estimate for the anisotropic difference quotients showing up when differentiating \eqref{anfunc}. We need an estimate of the form:
\begin{equation}\label{anest}
|f(x)-f(y)| \lesssim  \left| \left( \frac{x_1-y_1}{\delta_1} \, , \, \frac{x_2-y_2}{\delta_2} \right) \right|
\Big[ U(x) + U(y) \Big] \,.
\end{equation}
This is complicated by the fact that, as in the classical case, one expects to use a maximal function in $x_1$ and $x_2$ in order to estimate the difference quotients, but however this would not match (in terms of persistence of cancellations) with the presence of a singular integral in the variable $x_1$ only. This is resolved in Section~\ref{s:differ} by the use of tensor products of maximal functions, and will result in the proof of \eqref{anest} together with a bound of the form
$$
\| U \| \leq \delta_1 \| D_1 f\| + \delta_2 \| D_2 f \| \,.
$$

This is the plan how to obtain the proof of our main Theorem~\ref{fund}, containing the fundamental estimate for the distance between two regular Lagrangian flows associated to vector fields under the regularity assumption {\bf (R2)}. As recalled in Section~\ref{s:fund} we obtain as a corollary of Theorem~\ref{fund} existence, uniqueness, stability (with an effective rate) and compactness for regular Lagrangian flows, and well posedness for Lagrangian solutions to the continuity and transport equations. Applications to the Vlasov-Poisson equation will be detailed in \cite{vp}. See also \cite{maria}, where similar arguments have been applied to the study of the Vlasov-Poisson equation, also exploiting the notion of {\em maximal regular flow}.

\subsection*{Acknowledgment} This research has been partially supported by the SNSF grants 140232 and 156112.

\section{Background material}

This section is devoted to recalling some classical definitions and results from harmonic analysis. Most of the results below are stated without proofs, for which we refer to \cite{stein}. The proofs of the more specific results and additional comments can be found in \cite{jhde}.

\subsection{Weak Lebesgue spaces and equi-integrability}

We will denote by $\La^d$ the $d$-dimen\-sional Lebesgue measure and by $B_r(x)$ the open ball or radius $r>0$ centered at $x \in \ird$, shortened to $B_r$ in case the center of the ball is the origin of $\ird$.

\begin{defin}
Let $u$ be a measurable function on $\Omega\subset \ird$. For $1\leq p<\infty$, we set
$$
|||u|||^p_{M^p(\Omega)}=\sup_{\lambda>0} \{\lambda^p \La^d (\{ x \in \Omega \; : \; |u(x)|>\lambda\})\}
$$
and define the weak Lebesgue space $M^p(\Omega)$ as the space consisting of all such measurable functions $u:\Omega\rightarrow \IR$ with $|||u|||_{M^p(\Omega)}<\infty$. For $p=\infty$, we set $M^\infty(\Omega)=L^\infty(\Omega)$.
\end{defin}

Let us remark that the quantity $|||\cdot|||^p_{M^p(\Omega)}$ is {\em not} a norm, therefore we have chosen the notation with the three vertical bars, different from the usual one for the norm.

The following lemma shows that we can interpolate $M^1$ and $M^p$, with $p>1$, obtaining a bound on the $L^1$ norm, which depends logarithmically on the $M^p$ norm.

\begin{lemma}[Interpolation]
\label{interpollemma}
Let $u: \om\mapsto [0,+\infty)$ be a nonnegative measurable function, where $\om\subset\IR^d$ has finite measure. Then for every $1<p<\infty$, we have the interpolation estimate
$$
||u||_{L^1(\om)}\leq\frac{p}{p-1}|||u|||_{M^1(\om)}\left[1+\log\left(\displaystyle\frac{|||u|||_{M^p(\om)}}{|||u|||_{M^1(\om)}}\La^d(\om)^{1-\frac{1}{p}}\right)\right] \,,
$$
and analogously for $p=\infty$
$$
||u||_{L^1(\om)}\leq |||u|||_{M^1(\om)}\left[1+\log\left(\displaystyle\frac{||u||_{L^\infty(\om)}}{|||u|||_{M^1(\om)}}\La^d(\om) \right)\right] \,.
$$
\end{lemma}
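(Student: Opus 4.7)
The plan is to use the layer cake formula
$$\|u\|_{L^1(\Omega)} = \int_0^\infty \La^d(\{u > \lambda\}) \, d\lambda$$
and to bound the distribution function piecewise via the three pointwise estimates that are simultaneously available: the trivial bound $\La^d(\{u>\lambda\}) \leq \La^d(\Omega)$, the Markov bound $\La^d(\{u>\lambda\}) \leq |||u|||_{M^1(\Omega)}/\lambda$, and the $M^p$ bound $\La^d(\{u>\lambda\}) \leq |||u|||_{M^p(\Omega)}^p/\lambda^p$. Writing $m = |||u|||_{M^1(\Omega)}$ and $M = |||u|||_{M^p(\Omega)}$ for brevity, the first two bounds cross at $\lambda_1 = m/\La^d(\Omega)$ and the last two at $\lambda_2 = (M^p/m)^{1/(p-1)}$; I would pick these as the breakpoints.

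Before integrating, I would record the elementary interpolation $m \leq M\,\La^d(\Omega)^{1-1/p}$, obtained by writing $\La^d(\{u>\lambda\}) = \La^d(\{u>\lambda\})^{1/p}\La^d(\{u>\lambda\})^{1-1/p}$ and applying the $M^p$ bound to the first factor and the trivial bound to the second. This forces $\lambda_1 \leq \lambda_2$, which makes the three regimes consistently ordered and also guarantees that the argument of the logarithm in the final estimate is at least $1$, so that the stated inequality is meaningful.

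Using the trivial bound on $(0,\lambda_1)$, the $M^1$ bound on $(\lambda_1,\lambda_2)$, and the $M^p$ bound on $(\lambda_2,\infty)$, the three integrals produce $\La^d(\Omega)\cdot\lambda_1 = m$, then $m\log(\lambda_2/\lambda_1)$, and then $M^p/[(p-1)\lambda_2^{p-1}] = m/(p-1)$ by the defining relation $\lambda_2^{p-1}=M^p/m$. Since $\lambda_2/\lambda_1 = (M/m)^{p/(p-1)}\La^d(\Omega)$, the logarithm simplifies to $\frac{p}{p-1}\log\bigl(\frac{M}{m}\La^d(\Omega)^{1-1/p}\bigr)$, and summing gives
$$\|u\|_{L^1(\Omega)} \leq \frac{p}{p-1}\,m + \frac{p\,m}{p-1}\log\!\left(\frac{M}{m}\,\La^d(\Omega)^{1-1/p}\right),$$
which factors exactly into the claimed bound. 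The $p=\infty$ statement follows by the same scheme, with the third regime replaced by the truncation $\La^d(\{u>\lambda\})=0$ for $\lambda>\|u\|_{L^\infty(\Omega)}$; only two regimes then contribute and $\|u\|_{L^\infty(\Omega)}$ plays the role of $\lambda_2$. I do not foresee any substantive obstacle: the only care-requiring point is verifying the ordering $\lambda_1\leq\lambda_2$ (and its analog $m \leq \|u\|_{L^\infty(\Omega)}\La^d(\Omega)$ in the $p=\infty$ case, which is immediate from Markov) to justify the three-interval decomposition.
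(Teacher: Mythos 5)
The paper states this lemma without proof, deferring to the reference [jhde] for it; so there is no in-paper proof to compare against. Your argument is correct and is the standard one for weak-type interpolation with logarithmic loss: the layer-cake formula, with the distribution function bounded piecewise by the trivial estimate $\La^d(\om)$, the $M^1$ Markov estimate $m/\lambda$, and the $M^p$ estimate $M^p/\lambda^p$, split at the crossover points $\lambda_1 = m/\La^d(\om)$ and $\lambda_2 = (M^p/m)^{1/(p-1)}$. The preliminary verification $|||u|||_{M^1(\om)} \leq |||u|||_{M^p(\om)}\La^d(\om)^{1-1/p}$ is exactly the right device: it guarantees $\lambda_1 \leq \lambda_2$ (so the three-regime split is legitimate) and that the argument of the logarithm is $\geq 1$. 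Your three integrals evaluate to $m$, $m\log(\lambda_2/\lambda_1)$, and $m/(p-1)$, and the algebraic simplification of $\log(\lambda_2/\lambda_1)$ to $\frac{p}{p-1}\log\bigl(\frac{M}{m}\La^d(\om)^{1-1/p}\bigr)$ is correct, yielding exactly the claimed constant $\frac{p}{p-1}$. The $p=\infty$ case, with the $M^p$ tail replaced by truncation at $\|u\|_{L^\infty(\om)}$ and the analogous check $m \leq \|u\|_{L^\infty(\om)}\La^d(\om)$, is handled identically. No gaps.
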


We also recall the classical definition of equi-integrability.

\begin{defin}[Equi-integrability] Let $\Omega$ be an open subset of $\ird$. We say that a bounded family $\{ \varphi_i \}_{i \in I} \subset L^1(\Omega)$ is equi-integrable if the following two conditions hold:
\begin{itemize}
\item[(i)] For any $\var > 0$ there exists a Borel set $A \subset \Omega$ with finite measure such that $\int_{\Omega \setminus A} |\varphi_i| \, dx \leq \var$ for any $i \in I$;
\item[(ii)] For any $\var > 0$ there exists $\delta > 0$ such that, for every Borel set $E \subset \Omega$ with with $\La^d(E) \leq \delta$, there holds $\int_E | \varphi_i | \, dx \leq \var$ for any $i \in I$.
\end{itemize}
\end{defin}

The Dunford-Pettis theorem ensures that a bounded family in $L^1(\Omega)$
is relatively compact for the weak $L^1$ topology if and only if it is equi-integrable. 
Also, a sequence $u_n\in L^1(\ird)$ converges to $u$ in $L^1(\ird)$
if and only if it is equi-integrable and $u_n$ converges to $u$ locally in measure.
The following lemma can be proved with elementary tools.

\begin{lemma}\label{equi}
Consider a family $\{\varphi_i\}_{i\in I} \subset L^1(\om)$ which is bounded in $L^1(\om)$ and fix $1<p\leq\infty$. Then this family is equi-integrable if and only if for every $\var>0$, there exists a constant $C_\var$ and a Borel set $A_\var\subset\om$ with finite measure such that for every $i\in I$ one can write
$$
\varphi_i = \varphi_i^1 + \varphi_i^2 \,,
$$
with
$$
\| \varphi_i^1 \|_{L^1(\Omega)} \leq \var \qquad \text{ and } \qquad
{\rm spt}\,(\varphi_i^2)\subset A_\var,\quad
\| \varphi_i^2 \|_{L^p(\Omega)} \leq C_\var \qquad \text{ for all $i \in I$.}
$$
\end{lemma}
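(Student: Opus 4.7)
The plan is to prove the two implications separately, using a truncation argument for the non-trivial direction.

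For the \emph{sufficiency} direction ($\Leftarrow$), I would assume the decomposition is available and directly verify the two defining conditions of equi-integrability. Given $\var > 0$, apply the hypothesis with $\var/2$ to get a finite-measure set $A_{\var/2}$, a constant $C_{\var/2}$, and splittings $\varphi_i = \varphi_i^1 + \varphi_i^2$. For condition (i), the set $A_{\var/2}$ works since $\varphi_i^2$ is supported in $A_{\var/2}$, so $\int_{\Omega\setminus A_{\var/2}} |\varphi_i|\, dx = \int_{\Omega\setminus A_{\var/2}} |\varphi_i^1|\, dx \leq \var/2$. For condition (ii), on any Borel set $E$, H\"older's inequality yields
$$
\int_E |\varphi_i|\, dx \leq \|\varphi_i^1\|_{L^1(\Omega)} + \|\varphi_i^2\|_{L^p(\Omega)} \La^d(E)^{1-1/p} \leq \frac{\var}{2} + C_{\var/2}\, \La^d(E)^{1-1/p},
$$
with the convention $1/p = 0$ when $p=\infty$; choose $\delta$ so small that the second term is $\leq \var/2$.

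For the \emph{necessity} direction ($\Rightarrow$), I would produce the decomposition by a two-step truncation: first truncate in space using condition (i), then truncate the values using condition (ii). Given $\var > 0$, use (i) to find a finite-measure set $A_\var \subset \Omega$ with $\int_{\Omega \setminus A_\var} |\varphi_i|\, dx \leq \var/2$ for every $i$. The family $\{\varphi_i\}$ is bounded in $L^1$, so by Chebyshev $\La^d(\{|\varphi_i| > M\}) \leq \|\varphi_i\|_{L^1}/M \leq C/M$ uniformly in $i$. Using condition (ii) with $\var/2$, pick $M$ so large that $C/M \leq \delta$, which forces $\int_{\{|\varphi_i| > M\}} |\varphi_i|\, dx \leq \var/2$ uniformly in $i$. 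Then define
$$
\varphi_i^2 = \varphi_i\, \mathbf{1}_{A_\var}\, \mathbf{1}_{\{|\varphi_i| \leq M\}}, \qquad \varphi_i^1 = \varphi_i - \varphi_i^2.
$$

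It remains to check the required bounds. By construction $\text{spt}(\varphi_i^2) \subset A_\var$ and $\|\varphi_i^2\|_{L^\infty(\Omega)} \leq M$, so $\|\varphi_i^2\|_{L^p(\Omega)} \leq M\,\La^d(A_\var)^{1/p} =: C_\var$, handling both $p < \infty$ and $p = \infty$ (with $1/\infty = 0$). For the $L^1$ part, split $\{\varphi_i^1 \neq 0\}$ into the complement of $A_\var$ and the level set $A_\var \cap \{|\varphi_i| > M\}$, on each of which the two chosen bounds $\var/2$ control the integral, giving $\|\varphi_i^1\|_{L^1(\Omega)} \leq \var$. The only subtle point is the uniform choice of $M$, which is what condition (ii) of equi-integrability is designed to provide through the $L^1$-Chebyshev bound; there is no real obstacle, just bookkeeping between the two halves of $\var$.
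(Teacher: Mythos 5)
Your proof is correct. The paper does not supply a proof of this lemma, remarking only that it ``can be proved with elementary tools,'' and the two-sided truncation argument you give (spatial cutoff via condition (i), level-set cutoff via Chebyshev and condition (ii) for one direction; H\"older for the converse) is precisely the standard elementary argument the authors are alluding to.
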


\subsection{Singular integrals}

We briefly summarize the classical Calder\'on-Zygmund theory of singular integrals.

\begin{defin}\label{singker} We say that K is a {\em singular kernel} on $\ird$ if
\begin{enumerate}
\item $K\in\IS'(\ird)$ and $\hat{K}\in L^\infty(\ird)$, 
\item $K|_{\ird\setminus \{0\}}\in L^1_{\rm{loc}}(\ird\setminus \{0\})$ and there exists a constant $A\geq 0$ such that
$$\int_{|x|>2|y|}|K(x-y)-K(x)|dx\leq A$$ for every $y\in\ird$.
\end{enumerate}
\end{defin}

We now state a classical result that allows the extension of (the convolution with) a singular kernel to an operator on $L^p$ spaces.  

\begin{theorem}[Calder\'on-Zygmund]
Let $K$ be a singular kernel and define
$$
Su = K \Conv u \qquad \text{for $u\in L^2(\ird)$}
$$
in the sense of multiplication in the Fourier variable. Then for every $1<p<\infty$ we have the strong estimate
\begin{equation}\label{e:ssi}
\| Su \|_{L^p(\ird)} \leq C_{N,p}(A+||\hat{K}||_{L^\infty})||u||_{L^p(\ird)} \qquad  u\in L^p\cap L^2(\ird)\,,
\end{equation}
and for $p=1$ the weak estimate
\begin{equation}\label{e:wsi}
|||Su|||_{M^1(\ird)} \leq C_{N}(A+||\hat{K}||_{L^\infty})||u||_{L^1(\ird)} \qquad u\in L^1\cap L^2(\ird) \,.
\end{equation}
\end{theorem}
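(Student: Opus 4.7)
The plan is to follow the classical three-step Calder\'on--Zygmund argument: $L^2$ boundedness by Plancherel, weak $(1,1)$ via the Calder\'on--Zygmund decomposition, and then Marcinkiewicz interpolation plus duality to cover the remaining range $1<p<\infty$.

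First I would handle the $L^2$ case, which is essentially free: since $S$ is defined as multiplication by $\hat K$ on the Fourier side and $\hat K \in L^\infty$, Plancherel yields
$$
\|Su\|_{L^2(\ird)} \le \|\hat K\|_{L^\infty} \|u\|_{L^2(\ird)}.
$$
This gives in particular the meaning of $Su$ on the dense subspace $L^1 \cap L^2$.

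Next, the heart of the proof is \eqref{e:wsi}. Fix $u \in L^1\cap L^2$ and $\lambda>0$, and apply the Calder\'on--Zygmund decomposition at height $\lambda$ to $u$: this produces a decomposition $u = g + b$ with $b = \sum_j b_j$, where each $b_j$ is supported on a dyadic cube $Q_j$, the $Q_j$'s are pairwise disjoint, one has $\int b_j = 0$, the uniform bounds $\|g\|_{L^\infty} \lesssim \lambda$ and $\|g\|_{L^1}\le \|u\|_{L^1}$ hold, and the cubes satisfy $\sum_j |Q_j| \lesssim \|u\|_{L^1}/\lambda$ with $\|b_j\|_{L^1} \lesssim \lambda|Q_j|$. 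For the good part I would use Chebyshev and the $L^2$ bound already obtained:
$$
\La^d\bigl(\{|Sg|>\lambda/2\}\bigr) \lesssim \frac{\|\hat K\|_{L^\infty}^2}{\lambda^2}\|g\|_{L^2}^2 \lesssim \frac{\|\hat K\|_{L^\infty}^2}{\lambda}\|u\|_{L^1}.
$$
For the bad part, let $Q_j^*$ be a fixed dilate of $Q_j$ and set $E = \bigcup_j Q_j^*$, so $\La^d(E) \lesssim \|u\|_{L^1}/\lambda$. Outside $E$, exploit the zero-mean property of $b_j$ to write
$$
Sb_j(x) = \int_{Q_j}\bigl[K(x-y)-K(x-y_j)\bigr]b_j(y)\,dy,
$$
where $y_j$ is the center of $Q_j$, and integrate in $x$ over $\ird\setminus E$. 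The H\"ormander-type hypothesis (2) in Definition~\ref{singker} gives, after a change of variables, $\int_{\ird\setminus Q_j^*}|Sb_j(x)|\,dx \lesssim A\|b_j\|_{L^1}$; summing in $j$ produces $\int_{\ird\setminus E}|Sb(x)|\,dx \lesssim A\|u\|_{L^1}$, and Chebyshev finishes the bound on $\La^d(\{|Sb|>\lambda/2\}\setminus E)$. Combining the two parts gives \eqref{e:wsi}.

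Finally, for $1<p<2$ I would apply Marcinkiewicz interpolation between the weak $(1,1)$ bound just obtained and the strong $(2,2)$ bound, yielding \eqref{e:ssi} with a constant of the claimed form. For $2<p<\infty$ I would argue by duality: the formal $L^2$-adjoint of $S$ is convolution with $\tilde K(x)=\overline{K(-x)}$, which is again a singular kernel with the same constants $A$ and $\|\hat{\tilde K}\|_{L^\infty}$, hence bounded on $L^{p'}(\ird)$ by the previous step (since $1<p'<2$), and therefore $S$ is bounded on $L^p(\ird)$ with the same norm. The main technical obstacle is the bad-part estimate: it is where the Hörmander-type cancellation condition on $K$ must be used precisely, and where the zero-mean property of the $b_j$'s is essential to convert the weak-integrability of $K$ near infinity into an absolutely convergent $L^1$ integral outside the exceptional set $E$.
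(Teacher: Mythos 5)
Your proposal is correct and is essentially the standard Calder\'on--Zygmund argument from Stein, which is exactly the reference the paper cites for this theorem (the paper states it without proof). The three steps --- Plancherel for $L^2$, the Calder\'on--Zygmund decomposition together with the H\"ormander condition for weak $(1,1)$, then Marcinkiewicz interpolation and duality with the reflected kernel $\tilde K(x)=\overline{K(-x)}$ for the remaining range --- are exactly the right ones, and the constant dependence $C(A+\|\hat K\|_\infty)$ falls out as you indicate.
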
 

In addition, the operator $S$ can be extended to the whole $L^p(\ird)$ for any $1<p<\infty$ with values in $L^p(\ird)$, still satisfying \eqref{e:ssi}. For $p=1$, the operator $S$ extends to the whole $L^1(\ird)$ to an operator $S^{M^1}$ with values in $M^1(\ird)$, still satisfying \eqref{e:wsi}. However, a function in $M^1(\ird)$ is in general not integrable, therefore it does not define a distribution. Notice that, for $u \in L^1(\ird)$, we can define a tempered distribution $S^D \in \IS'(\ird)$ by the formula
\begin{equation}\label{bracket}
\langle S^Du,\varphi\rangle=\langle u,\tilde{S}\varp\rangle \qquad
\text{for every $\varp\in\IS(\ird)$,}
\end{equation}
where $\tilde{S}$ is the singular integral operator associated to the kernel $\tilde{K}(x) = K(-x)$. The same holds for $u$ a finite measure in $\ird$. The two operators $S^{M^1}$ and $S^D$ are different and cannot be identified. Since $\IF:L^1(\ird) \to L^\infty(\ird)$ is bounded, and by definition we have $\hat{K}\in L^\infty(\ird)$, the definition in \eqref{bracket} is equivalent to the definition in Fourier variables 
$$
\widehat{S^Du}=\hat{K}\hat{u}\,.
$$ 
 
We also recall a particular class of singular kernels:
\begin{defin}
A kernel $K$ is a {\em singular kernel of fundamental type} in $\ird$ if the following properties hold:
\begin{enumerate}
\item $K|_{\IR^d\setminus\{0\}}\in C^1(\IR^d\setminus\{0\})$,
\item There exists a constant $C_0\geq 0$ such that
\begin{equation}\label{c0}
|K(x)|\leq \frac{C_0}{|x|^d} \qquad x\in \ird\setminus\{0\} \,,
\end{equation}
\item There exists a constant $C_1\geq 0$ such that
\begin{equation}\label{c1}
|\nabla K(x)|\leq \frac{C_1}{|x|^{d+1}} \qquad x\in \ird\setminus\{0\} \,,
\end{equation}
\item There exists a constant $A_1\geq 0$ such that 
\begin{equation}
\left|\int_{R_1<|x|<R_2}K(x) dx\right| \leq A_1 \qquad
\text{for every $0<R_1<R_2<\infty$.}
\end{equation}
\end{enumerate}
\end{defin}

In particular, these conditions are sufficient to extend the function defined on $\ird\setminus\{0\}$ to a singular kernel $K$ on $\ird$, unique up to addition of a multiple of a Dirac delta at the origin, and which satisfies the estimates in Definition~\ref{singker}.

\subsection{Maximal functions}

We now recall the classical maximal function.
\begin{defin} Let $u\in L^1_{\loc}(\IR^d)$. The maximal function of $u$ is defined as 
$$
Mu(x)=\sup_{\var>0}\mint_{B_\var(x)}|u(y)|dy
$$
for every $x\in\IR^d$. 
\end{defin}

The maximal function $Mu$ is finite almost everywhere for $u\in L^p(\IR^d)$, for every $1\leq p \leq \infty$. For every $1<p\leq\infty$ we have the strong estimate
\begin{equation}\label{ee:strong}
\|Mu\|_{L^p(\IR^d)} \leq C_{d,p}||u||_{L^p(\IR^d)}\,,
\end{equation}
with only the weak estimate for $p=1$
\begin{equation}\label{ee:weak}
|||Mu|||_{M^1(\IR^d)} \leq C_{d}||u||_{L^1(\IR^d)}\,.
\end{equation}

\subsection{The smooth maximal function and cancellations}

Given two singular kernels of fundamental type $K_1$ and $K_2$, with bounded and smooth Fourier transform, we consider the associated singular integral operators $S_1$ and $S_2$. The composition $S_2 \circ S_1$ is still a singular integral operator $S$, associated to a singular kernel $K$ characterized by $\widehat{K}=\widehat{K_2}\widehat{K_1}$. In general, composing two weak estimates (as in \eqref{e:wsi}) is not well defined. However, there are cancellations in the convolution $K_2 \Conv K_1$ (that is, in the composition of the two singular integral operators), which allow us to define $S_2\circ S_1$. A very important result is that we can compose a special class of maximal functions with a singular integral operator, yielding a composition operator that is bounded $L^1\rightarrow M^1$ and $L^2\rightarrow L^2$. 

We consider a maximal function that is ``smaller'' than the classical maximal function, in order to allow cancellations with the singular integral operator. Here the absolute value is outside the integral, instead of inside. The result after taking smooth averages is a maximal function that is ``smoother'' than the classical maximal function.

\begin{defin}[Smooth maximal function]\label{d:smoothmax}
Given a family of functions $\{\rho^\nu\}_\nu\subset L^\infty_c(\ird)$, for every function $u\in L^1_{\loc}(\ird)$ we define the $\left\{\rho^\nu\right\}$-maximal function of $u$ as 
$$
M_{\left\{\rho^\nu\right\}}(u)(x)=\sup_\nu\sup_{\var>0} \left|\int_{\ird} \rho^\nu_\var(x-y)u(y)dy\right|=\sup_\nu\sup_{\var>0}\left|(\rho^\nu_\var *u)(x)\right| ,
$$
where as usual
$$
\rho^\nu_\var(x)=\frac{1}{\var^d}\rho^\nu\left(\frac{x}{\var}\right) .
$$
In the case when $u$ is a distribution, we take a smooth family $\{\rho^\nu\}_\nu\subset C^\infty_c(\ird)$ and define in the distributional sense
$$
M_{\left\{\rho^\nu\right\}}(u)(x)=\sup_\nu\sup_{\var>0}\left|\langle u,\rho^\nu_\var(x-\cdot)  \rangle\right|.
$$
\end{defin}

The importance of this class of maximal functions is that it is possible to define the composition $M_{\left\{\rho^\nu\right\}}S$ with a singular integral operator, which is impossible with the usual maximal function. The following theorem has been proved in \cite{jhde}.

\begin{theorem}\label{cancel}
Let $K$ be a singular kernel of fundamental type and let $S$ be the associated singular integral operator. Let $\{\rho^\nu\}_\nu\subset L^\infty(\ird)$ be a family of kernels such that 
$$
{\rm spt} \, \rho^\nu\subset B_1 \quad \textrm{ and } \quad 
\|\rho^\nu\|_{L^1(\ird)}\leq Q_1 \qquad \text{ for every $\nu$.}
$$
Assume that for every $\var>0$ and every $\nu$ there holds 
$$
\bigl\| \bigl(\var^d K(\var\cdot) \bigl) \ast \rho^\nu \bigr\|_{C_b(\ird)}\leq Q_2
\qquad \text{for every $\var>0$ and for every $\nu$.} 
$$
Then we have the following estimates.
\begin{enumerate}
\item There exists a constant $C_d$, depending on the dimension $d$ only,
such that
$$
|||M_{\{\rho^\nu\}}(Su)|||_{M^1(\ird)}\leq C_d\Bigl(Q_2+Q_1(C_0+C_1+||\widehat{K}||_{\infty}\Bigl)||u||_{L^1(\ird)}
$$
for every $u\in L^1\cap L^2(\ird)$. If $\{\rho^\nu\}\subset C^\infty_c(\ird)$, and $u$ is a finite measure on $\ird$, then the same estimate holds, where $Su$ is defined as a distribution $S^Du$:
$$
|||M_{\{\rho^\nu\}}(Su)|||_{M^1(\ird)}\leq C_d\Bigl(Q_2+Q_1(C_0+C_1+||\widehat{K}||_{ \infty}\Bigl)||u||_{\IM(\ird)}\,.
$$
\item If $Q_3=\sup_\nu||\rho^\nu||_{L^\infty(\ird)}$ is finite, then there exists $C_d$, depending on the dimension $d$ only, such that
$$
\| M_{\{\rho^\nu\}}(Su)\|_{L^2(\ird)}\leq C_d Q_3 \| \widehat{K}\|_{\infty} \|u\|_{L^2(\ird)}
$$ 
for every $u\in L^2(\ird)$.
\end{enumerate}
\end{theorem}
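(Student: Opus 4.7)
The plan is to reduce both estimates to uniform pointwise bounds on the \emph{effective kernels} $\tilde K_{\nu,\varepsilon}:=\rho^\nu_\varepsilon*K$, for which $(\rho^\nu_\varepsilon*Su)(x)=(\tilde K_{\nu,\varepsilon}*u)(x)$. From the hypotheses one extracts the following uniform-in-$(\nu,\varepsilon)$ estimates. The scaling identity $\tilde K_{\nu,\varepsilon}(z)=\varepsilon^{-d}\bigl((\varepsilon^d K(\varepsilon\,\cdot))*\rho^\nu\bigr)(z/\varepsilon)$ together with the $Q_2$-hypothesis yields $\|\tilde K_{\nu,\varepsilon}\|_\infty\le Q_2\varepsilon^{-d}$. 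Since ${\rm spt}\,\rho^\nu_\varepsilon\subset B_\varepsilon$, for $|z|>2\varepsilon$ one has $|z-y|\ge|z|/2$ whenever $|y|\le\varepsilon$, so by differentiating under the integral in $\tilde K_{\nu,\varepsilon}(z)=\int\rho^\nu_\varepsilon(y)K(z-y)\,dy$ and applying the pointwise bounds on $K$ and $\nabla K$ we get
\[
|\tilde K_{\nu,\varepsilon}(z)|\lesssim\frac{Q_1 C_0}{|z|^d},\qquad
|\nabla\tilde K_{\nu,\varepsilon}(z)|\lesssim\frac{Q_1 C_1}{|z|^{d+1}}\qquad(|z|>2\varepsilon);
\]
finally $\|\widehat{\tilde K}_{\nu,\varepsilon}\|_\infty\le Q_1\|\widehat K\|_\infty$ via $\|\widehat{\rho^\nu_\varepsilon}\|_\infty\le\|\rho^\nu_\varepsilon\|_{L^1}\le Q_1$.

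Part~(2) is then immediate: the $L^\infty$-hypothesis $Q_3$ gives $|(\rho^\nu_\varepsilon*Su)(x)|\le Q_3\varepsilon^{-d}\int_{B_\varepsilon(x)}|Su|\lesssim Q_3\,M(Su)(x)$, so $M_{\{\rho^\nu\}}(Su)\lesssim Q_3\,M(Su)$ pointwise; the strong $L^2$-bound for the classical maximal function and the Calder\'on--Zygmund $L^2$-estimate $\|Su\|_{L^2}\lesssim\|\widehat K\|_\infty\|u\|_{L^2}$ conclude.

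For part~(1), I would apply a Calder\'on--Zygmund decomposition $u=g+b$ at level $\lambda$, with $b=\sum_i b_i$, each $b_i$ mean-zero and supported in a cube $Q_i$ of diameter $\delta_i$, $\sum_i|Q_i|\lesssim\|u\|_{L^1}/\lambda$, $\|g\|_\infty\lesssim\lambda$ and $\|g\|_{L^2}^2\lesssim\lambda\|u\|_{L^1}$. Using the inclusion $\{M_{\{\rho^\nu\}}(Su)>\lambda\}\subset\{M_{\{\rho^\nu\}}(Sg)>\lambda/2\}\cup\{M_{\{\rho^\nu\}}(Sb)>\lambda/2\}$, the good part is handled by Chebyshev combined with an $L^2$-estimate on $M_{\{\rho^\nu\}}(Sg)$ coming from $\|\widehat{\tilde K}_{\nu,\varepsilon}\|_\infty\le Q_1\|\widehat K\|_\infty$ and $\|g\|_{L^2}^2\lesssim\lambda\|u\|_{L^1}$. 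For the bad part, outside $\bigcup_i 2Q_i$ (a set of measure $\lesssim\|u\|_{L^1}/\lambda$) I use $\int b_i=0$ to write
\[
(\tilde K_{\nu,\varepsilon}*b_i)(x)=\int_{Q_i} b_i(y)\bigl[\tilde K_{\nu,\varepsilon}(x-y)-\tilde K_{\nu,\varepsilon}(x-y_i)\bigr]\,dy,
\]
with $y_i$ the center of $Q_i$, and combine with the Step~1 kernel estimates.

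The main obstacle is that the gradient bound on $\tilde K_{\nu,\varepsilon}$ only holds for $|z|>2\varepsilon$, whereas the cubes carry their own independent scale $\delta_i$: the regimes $\varepsilon\lesssim\delta_i$ (standard H\"ormander-type argument via the gradient bound) and $\varepsilon\gg\delta_i$ (pointwise smoothness of $\tilde K_{\nu,\varepsilon}$ on scale $\delta_i$ unavailable) must be treated separately. In the latter regime the idea is to use the identity $\tilde K_{\nu,\varepsilon}*b_i = K*(\rho^\nu_\varepsilon*b_i)$: the smoothed atom $\rho^\nu_\varepsilon*b_i$ is mean-zero, has $L^1$-norm $\le Q_1\|b_i\|_{L^1}$ and lives essentially on scale $\varepsilon\gg\delta_i$, so the cancellation argument can be rerun for $K$ itself against this atom, with the uniform bound $\|\tilde K_{\nu,\varepsilon}\|_\infty\le Q_2\varepsilon^{-d}$ controlling the near-diagonal contribution. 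Summing in $i$ yields the weak-type estimate; the extension to finite measures in the smooth case follows by a density argument through the distributional definition $S^Du$ from \eqref{bracket}.
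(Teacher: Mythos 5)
The paper does not actually contain a proof of Theorem~\ref{cancel}: it is quoted from \cite{jhde} with the remark ``the following theorem has been proved in \cite{jhde}''. So the comparison can only be against what a correct proof must look like, not against text in this source.

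Your reduction to the effective kernels $\tilde K_{\nu,\varepsilon}=\rho^\nu_\varepsilon*K$ and the derived bounds ($\|\tilde K_{\nu,\varepsilon}\|_\infty\le Q_2\varepsilon^{-d}$, pointwise size/gradient bounds for $|z|>2\varepsilon$, uniform Fourier bound $Q_1\|\widehat K\|_\infty$) are correct, and your proof of part~(2) is complete and standard. You also correctly locate the real difficulty in part~(1), namely the competing scales $\varepsilon$ (from the maximal function) and $\delta_i$ (from the Calder\'on--Zygmund cubes). However, two steps in your sketch of part~(1) have genuine gaps.

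First, the good part. You invoke ``an $L^2$-estimate on $M_{\{\rho^\nu\}}(Sg)$ coming from $\|\widehat{\tilde K}_{\nu,\varepsilon}\|_\infty\le Q_1\|\widehat K\|_\infty$.'' That Fourier bound only yields $\|\tilde K_{\nu,\varepsilon}*g\|_{L^2}\lesssim\|g\|_{L^2}$ for each fixed $(\nu,\varepsilon)$; it does not control $\bigl\|\sup_{\nu,\varepsilon}|\tilde K_{\nu,\varepsilon}*g|\bigr\|_{L^2}$. The one mechanism in the theorem for passing from the family to the supremum in $L^2$ is the pointwise domination $M_{\{\rho^\nu\}}\le Q_3 M$, which is precisely the $Q_3$ hypothesis of part~(2) and is \emph{not} available in part~(1). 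As written, the Chebyshev step for the good part silently uses part~(2) to prove part~(1).

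Second, the bad part in the regime $\varepsilon\gg\delta_i$. Rewriting $\tilde K_{\nu,\varepsilon}*b_i=K*(\rho^\nu_\varepsilon*b_i)$ and using cancellation of $K$ against the smoothed atom controls the region $|x-y_i|\gtrsim\varepsilon$, but the ``near-diagonal'' annulus $c\delta_i<|x-y_i|\lesssim\varepsilon$ grows with $\varepsilon$, and on it the only available control is $|\tilde K_{\nu,\varepsilon}*b_i(x)|\le 2Q_2\varepsilon^{-d}\|b_i\|_{L^1}$. For a fixed $\varepsilon$ this integrates to $\lesssim Q_2\|b_i\|_{L^1}$, which is fine, but after taking $\sup_{\varepsilon\ge|x-y_i|/4}$ the resulting pointwise bound is only $\lesssim Q_2\|b_i\|_{L^1}|x-y_i|^{-d}$. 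This is not integrable on $\IR^d\setminus 2Q_i$, and although it is in $M^1$, the quantity $|||\cdot|||_{M^1}$ is not subadditive, so the sum over $i$ cannot be closed this way. Some additional cancellation (e.g.\ exploiting the zero mean of $b_i$ even inside the regularization scale, or a vector-valued H\"ormander condition formulated so that the sup over $\varepsilon$ is absorbed before integration) is needed, and your sketch does not supply it. Until these two points are resolved, the argument for part~(1) is incomplete.
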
 
 
\section{Regular Lagrangian flows}

As mentioned in the Introduction, we will deal with flows of non-smooth vector fields. The adequate notion of flow in this context is that of {\em regular Lagrangian flow}. Given a vector field $b(s,x):(0,T)\times\irn\rightarrow\irn$, we assume the following growth condition:

\begin{itemize}
\item[{\bf(R1)}] The vector field $b(s,x)$ can be decomposed as
$$
\frac{b(s,x)}{1+|x|}=\tilde{b}_1(s,x)+\tilde{b}_2(s,x) \,,
$$
with
$$
\tilde{b}_1\in L^1((0,T);L^1(\irn)) \qquad \text{ and } \qquad  \tilde{b}_2\in L^1((0,T);L^\infty(\irn))\,.
$$
\end{itemize}

Given a vector field satisfying {\bf (R1)}, we codify in the following definition of {\em regular Lagrangian flow} the notion of ``almost everywhere flow which almost preserves the Lebesgue measure''.

\begin{defin}[Regular Lagrangian flow]\label{d:rlf}
If $b$ is a vector field satisfying {\bf (R1)}, then for fixed $t\in[0,T)$, a map $$X\in C([t,T]_s;L^0_{\loc}(\irn_x))\cap \mathcal{B}([t,T]_s;\log L_{\loc}(\irn_x))$$ is a regular Lagrangian flow in the renormalized sense relative to $b$ starting at $t$ if we have the following:
\begin{enumerate}
\item The equation 
$$\p_s\bigl(\beta(X(s,x))\bigl)=\beta ' (X(s,x))b(s,X(s,x))$$ holds in $\ID'((t,T)\times\irn)$, for every function $\beta\in C^1(\irn;\IR)$ that satisfies
$|\beta(z)|\leq C(1+\log(1+|z|))$ and $|\beta'(z)|\leq\frac{C}{1+|z|}$ for all $z\in\irn$,
\item $X(t,x)=x$ for $\La^N$-a.e $x\in\irn$, 
\item There exists a constant $L\geq 0$ such that $\int_{\irn} \varp(X(s,x))dx\leq L\int_{\irn} \varp(x)dx$ for all measurable $\varp:\irn\rightarrow[0,\infty)$.  
\end{enumerate}
\end{defin}

We will usually refer to the constant $L$ in Definition~\ref{d:rlf}(3) as the {\em compressibility constant} of the flow. We have denoted by $L^0_\loc$ the space of measurable functions endowed with the local convergence in measure, and by $\mathcal{B}$ the space of bounded functions. 

We define the sublevel of the flow as 
\begin{equation}\label{e:sub}
G_\la=\{x\in\irn: |X(s,x)|\leq\la \text{ for almost all } s\in[t,T]\} \,.
\end{equation}

The following lemma gives an estimate for the decay of the superlevels of a regular Lagrangian flow.
\begin{lemma}\label{estsuper}
Let $b:(0,T)\times\irn\rightarrow \irn$ be a vector field satisfying  {\bf (R1)} and let $X:[t,T]\times\irn\rightarrow\irn$ be a regular Lagrangian flow relative to $b$ starting at time $t$, with compressibility constant $L$. 
Then for all $r,\lambda>0$
$$
\La^N(B_r\setminus G_\lambda)\leq g(r,\lambda)\,,
$$
where the function $g$ depends only on $L$, $\|\Tilde{b}_1\|_{L^1((0,T);L^1(\irn))}$ and $\|\Tilde{b}_2\|_{L^1((0,T);L^\infty(\irn))}$ and satisfies $g(r,\lambda)\downarrow 0$ for $r$ fixed and $\lambda\uparrow \infty$. 
\end{lemma}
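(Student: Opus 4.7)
The plan is to use the renormalization property of Definition~\ref{d:rlf}(1) with a weight comparable to $\log(1+|z|)$, combined with the growth assumption {\bf (R1)}, the compressibility estimate (3), and Chebyshev's inequality, applied to the supremum in $s$ of $\beta(X(s,x))$.

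First, I pick $\beta\in C^1(\irn;\IR)$ with $|\beta'(z)|\leq C_\beta/(1+|z|)$ and $\beta(z)\geq \log(1+|z|)-1$ (for instance, a smoothing of $\log(1+|z|)$ near the origin). Such a $\beta$ satisfies the admissibility conditions of Definition~\ref{d:rlf}(1), and by {\bf (R1)} the right-hand side of the renormalized equation satisfies the pointwise bound
$$
|\beta'(X(s,x))\,b(s,X(s,x))| \leq C_\beta\bigl(|\tilde b_1(s,X(s,x))| + |\tilde b_2(s,X(s,x))|\bigr).
$$
Using compressibility on $\tilde b_1$ and the $L^\infty_x$ bound on $\tilde b_2$, this upper bound is in $L^1_{s,x}((t,T)\times B_r)$, which ensures that for $\La^N$-a.e.\ $x$ the map $s\mapsto \beta(X(s,x))$ is absolutely continuous and the distributional identity in Definition~\ref{d:rlf}(1) upgrades to the pointwise inequality
$$
\beta(X(s,x)) \leq \beta(x) + C_\beta\int_t^s \bigl(|\tilde b_1(\tau,X(\tau,x))| + |\tilde b_2(\tau,X(\tau,x))|\bigr)\,d\tau
\qquad \text{for every } s\in[t,T].
$$

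Next, I take the supremum in $s\in[t,T]$ and integrate over $B_r$. The left-hand side becomes $\int_{B_r}\sup_{s}\beta(X(s,x))\,dx$; on the right, Fubini and condition~(3) of Definition~\ref{d:rlf} applied with $\varphi = |\tilde b_1(\tau,\cdot)|$ give
$$
\int_{B_r}|\tilde b_1(\tau,X(\tau,x))|\,dx \leq L\,\|\tilde b_1(\tau,\cdot)\|_{L^1(\irn)},
$$
and trivially $\int_{B_r}|\tilde b_2(\tau,X(\tau,x))|\,dx\leq \La^N(B_r)\,\|\tilde b_2(\tau,\cdot)\|_{L^\infty(\irn)}$. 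This yields
$$
\int_{B_r}\sup_{s\in[t,T]}\beta(X(s,x))\,dx \leq \La^N(B_r)\log(1+r) + C_\beta\bigl[L\|\tilde b_1\|_{L^1_tL^1_x} + \La^N(B_r)\|\tilde b_2\|_{L^1_tL^\infty_x}\bigr] =: M(r).
$$

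Finally, Chebyshev's inequality combined with the inclusion $B_r\setminus G_\la \subset \{x\in B_r : \sup_{s}\beta(X(s,x))\geq \log(1+\la)-1\}$ (valid up to $\La^N$-null sets by the continuity $X\in C([t,T];L^0_{\loc}(\irn))$) yields
$$
\La^N(B_r\setminus G_\la)\leq \frac{M(r)}{\log(1+\la)-1} =: g(r,\la),
$$
and $g(r,\la)\downarrow 0$ as $\la\uparrow\infty$ with $r$ fixed, while depending only on $L$, $\|\tilde b_1\|_{L^1_tL^1_x}$ and $\|\tilde b_2\|_{L^1_tL^\infty_x}$. The main technical point is the passage from the distributional renormalized identity to the pointwise bound and the subsequent exchange of supremum and integral, which rest on the continuity of $X$ in time with values in $L^0_{\loc}$ and on the $L^1_{s,x}$ integrability of the renormalized right-hand side; once this is granted, the remaining estimates are routine.
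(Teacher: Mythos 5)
Your proof is correct and follows essentially the same approach as the one the paper cites from \cite{jhde}: renormalize with $\beta\approx\log(1+|\cdot|)$, combine the growth bound \textbf{(R1)} with the compressibility condition (3) and Fubini, and conclude with Chebyshev on the superlevels. The only cosmetic adjustments are to also require $\beta(z)\leq\log(1+|z|)+C$ so that $\int_{B_r}\beta(x)\,dx$ is controlled by $\La^N(B_r)(\log(1+r)+C)$, and to apply Chebyshev to the nonnegative quantity $\sup_s\beta(X(s,x))+1$ (since $\beta\geq -1$) with denominator $\log(1+\lambda)$ rather than to $\sup_s\beta$ with denominator $\log(1+\lambda)-1$; neither changes the argument.
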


Indeed the regular Lagrangian flow $X$ has a logarithmic summability, and this clarifies the class of renormalization functions $\beta$ considered in Definition~\ref{d:rlf}(1). See \cite{jhde} for the proof.

\section{Regularity assumptions and the anisotropic functional}\label{s:reg}

We wish to consider a regularity setting of the vector field $b(t,x)$ in which the (weak) regularity has a different character with respect to different directions in space. We split $\irn$ as $\irn=\IR^{n_1}\times\IR^{n_2}$ with variables $\x\in\IR^{n_1}$ and $\y\in \IR^{n_2}$. We denote by $D_1=D_{x_1}$ the derivative with respect to the first $n_1$ variables $x_1$, and by $D_2=D_{x_2}$ the derivative with respect to the last $n_2$ variables $x_2$. Accordingly, we denote $b=(b_1,b_2)(s,x_1,x_2)$. For $X(s,\x,\y)$ a regular Lagrangian flow associated to $b$ we denote $X=(X_1,X_2)(s,x_1,x_2)$. 

We are going to assume that $D_1b_2$ is ``less regular'' than $D_1b_1,D_2b_1,D_2b_2$: the derivative $D_1b_2$ is a singular integral of a measure, whereas the other derivatives are singular integrals of $L^1$ functions. This is made precise as follows:

\begin{itemize}
\item[{\bf (R2)}] We assume that
\begin{equation}\label{eq:r2bmat}
Db  =
\left( \begin{matrix} 
   D_1 b_1 & D_2b_1\\
  D_1 b_2 & D_2b_2 
\end{matrix} \right)
=
\left( \begin{matrix} 
  \gamma^1 S^1 \pp &     \gamma^2 S^2\qq\\
   \gamma^3 S^3 \mm &    \gamma^4 S^4\rr
\end{matrix} \right)\,,
\end{equation}
where the sub-matrices have the representation
$$
\arraycolsep=1.4pt\def\arraystretch{2.2}
 \setlength\arraycolsep{15pt}
 \begin{array}{lcl} i,j\in\{1,\ldots,n_1\}: &   & i\in\{1,\ldots,n_1\},\quad j\in\{n_1+1,\ldots,n_2\}: \\
   (D_1b_1)^i_j=\displaystyle\sum^m_{k=1} \gamma_{jk}^{1i}(s,\y) S^{1i}_{jk}\pp^i_{jk}(s,\x)  &   &  (D_2b_1)^i_j=\displaystyle\sum^m_{k=1}  \gamma_{jk}^{2i}(s,\y) S^{2i}_{jk}\qq^i_{jk}(s,\x) \\
   i\in\{n_1+1,\ldots,n_2\}, \quad j\in\{1,\ldots,n_1\} : &&  i\in\{n_1+1,\ldots,n_2\}, \quad j\in\{n_1+1,\ldots,n_2\} :\\
   (D_1b_2)^i_j=\displaystyle\sum^m_{k=1} \gamma_{jk}^{3i}(s,\y)S^{3i}_{jk}\mm^i_{jk}(s,\x) & & (D_2b_2)^i_j=\displaystyle\sum^m_{k=1}  \gamma_{jk}^{4i}(s,\y) S^{4i}_{jk}\rr^i_{jk}(s,\x) \,.
    \end{array}
$$
In the above assumptions we have that:
\begin{itemize}
\item $S^{1i}_{jk},S^{2i}_{jk},S^{3i}_{jk},S^{4i}_{jk}$ are singular integral operators associated to singular kernels of fundamental type in $\IR^{n_1}$,
\item the functions $\pp^i_{jk}$, $\qq^i_{jk}$, $\rr^i_{jk} $ belong to $L^1((0,T);L^1(\IR^{n_1}))$, 
\item $\mm^i_{jk}\in L^1((0,T);\IM(\IR^{n_1}))$, 
\item the functions $\gamma^{1,i}_{jk}, \gamma^{2,i}_{jk}, \gamma^{3,i}_{jk}, \gamma^{4,i}_{jk}$ belong to $L^\infty((0,T);L^{q}(\IR^{n_2})) $ for some $q>1$.
\end{itemize} 
\end{itemize}

We have denoted by $L^1((0,T);\IM(\IR^{n_1}))$ the space of all functions $t \mapsto \mu(t,\cdot)$ taking values in the space $\IM(\IR^{n_1})$ of finite signed measures on  $\IR^{n_1}$ such that 
$$
\int_0^T \|\mu (t,\cdot)\|_{\IM(\IR^{n_1})} \, dt<\infty \,.
$$ 

\begin{rmk} 
The assumption on the functions $\gamma^{1,i}_{jk}, \gamma^{2,i}_{jk}, \gamma^{3,i}_{jk}, \gamma^{4,i}_{jk}$ could be relaxed to $L^\infty((0,T);L^{q}_\loc(\IR^{n_2}))$. This would require the use of a localized maximal function.
\end{rmk}

We will additionally assume that
\begin{itemize}
\item[{\bf (R3)}]
$$
b\in L^p_{\loc}([0,T] \times\irn) \qquad \text{ for some $p>1$.}
$$
\end{itemize}

As mentioned in the Introduction, the proof of our main result will exploit an {\em anisotropic} functional (already provisionally introduced in \eqref{anfunc}), which extends the functional \eqref{e:functional} to the regularity setting under investigation. Let $A$ be the constant $N \times N$ matrix
\begin{equation}\label{A}
A = {\rm Diag \,} (\da,\ldots,\da,\db,\ldots,\db) \,.
\end{equation}
$A$ acts on vectors in $\irn$ by a dilation of a factor $\da$ on the first $n_1$ coordinates, and of a factor $\db$ on the last $n_2$ coordinates: $A (x_1,x_2) = (\da x_1,\db x_2)$.

Given $X(t,x_1,x_2)$ and $\widebar{X}(t,x_1,x_2)$  regular Lagrangian flows associated to $b$ and $\bar{b}$ respectively, we denote by $G_\lambda$ and $\wbar{G}_\lambda$ the sublevels of $X$ and $\wbar{X}$ defined as in \eqref{e:sub}. The proof of our main theorem (see Theorem~\ref{fund}) is based on the study of the following anisotropic functional:
\begin{equation}\label{e:anfunctional}
\Phi_{\da,\db}(s)=\int\limits_{B_r\cap G_\la \cap \wbar{G}_\la}
 \log\left(1+ \left| A^{-1}\left[X(s,\x,\y)-\wbar{X}(s,\x,\y)\right] \right| \right) \, dx \,.
\end{equation}

\section{Estimates of anisotropic difference quotients}\label{s:differ}

In this section we first recall the classical estimate for the difference quotients of a $BV$ function, and then recover an analogous ``anisotropic'' version of this result for vector fields in the regularity setting of {\bf (R2)}. This will be a key tool in order to estimate the functional \eqref{e:anfunctional}.

\begin{lemma} \label{prop1}
If $u\in BV(\ird)$, then there exists an $\La^d$-negligible set 
$\mathcal{N}\subset \IR^d$ such that 
$$
|u(x)-u(y)|\leq C_d |x-y|\Big((MDu)(x)+(MDu)(y)\Big)
$$ 
for every $x,y\in \ird \setminus \mathcal{N}$, where $Du$ is the distributional derivative of $u$, represented by a measure.
\end{lemma}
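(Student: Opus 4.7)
The plan is to reduce the estimate to a pointwise bound of the form $|u(x)-u_{B_r(x)}|\leq C_d\, r\, MDu(x)$ at every Lebesgue point, where $u_{B_r(x)}=\fint_{B_r(x)}u$, and then combine the estimate at $x$ and at $y$ via an intermediate averaging set. The set $\IN$ will be taken to be the complement of the set of Lebesgue points of $u$, which has zero Lebesgue measure since $u\in L^1_{\rm loc}$.

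The first step is the pointwise estimate. At a Lebesgue point $x$ the averages $u_{B_\rho(x)}$ converge to $u(x)$ as $\rho\downarrow 0$, so one may write the telescoping identity
$$
u(x)-u_{B_r(x)}=\sum_{k=0}^{\infty}\bigl(u_{B_{r/2^{k+1}}(x)}-u_{B_{r/2^k}(x)}\bigr).
$$
Each term is controlled by the Poincar\'e inequality on a ball for $BV$ functions,
$$
\fint_{B_\rho(x)}|u-u_{B_\rho(x)}|\,dy\leq C_d\,\rho\,\frac{|Du|(B_\rho(x))}{|B_\rho(x)|}\leq C_d\,\rho\,MDu(x),
$$
so that $|u_{B_{\rho/2}(x)}-u_{B_\rho(x)}|\leq C_d\,\rho\,MDu(x)$. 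Summing the geometric series in $\rho=r/2^k$ gives the desired pointwise bound.

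The second step combines the two endpoints. Let $x,y$ be Lebesgue points of $u$, set $r=|x-y|$, and consider the balls $B_{2r}(x)$ and $B_{2r}(y)$, which both contain the segment joining $x$ and $y$ and whose intersection $W=B_{2r}(x)\cap B_{2r}(y)$ satisfies $\La^d(W)\geq c_d r^d$. By the triangle inequality
$$
|u(x)-u(y)|\leq |u(x)-u_{B_{2r}(x)}|+|u_{B_{2r}(x)}-u_W|+|u_W-u_{B_{2r}(y)}|+|u_{B_{2r}(y)}-u(y)|.
$$
The first and last terms are bounded by $C_d r MDu(x)$ and $C_d r MDu(y)$ via the previous step. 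For the middle terms, using again the Poincar\'e inequality together with the measure comparison $|B_{2r}(x)|/|W|\leq C_d$,
$$
|u_{B_{2r}(x)}-u_W|\leq \frac{|B_{2r}(x)|}{|W|}\fint_{B_{2r}(x)}|u-u_{B_{2r}(x)}|\,dy\leq C_d\, r\, MDu(x),
$$
and analogously at $y$. Summing yields the claimed estimate.

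The only nontrivial ingredient is the $BV$-Poincar\'e inequality $\int_{B_\rho(x)}|u-u_{B_\rho(x)}|\,dy\leq C_d\,\rho\,|Du|(B_\rho(x))$, which is a classical result that works equally well when $Du$ is merely a measure (the setting assumed here); once it is available, the proof reduces to the telescoping and triangle-inequality arguments sketched above.
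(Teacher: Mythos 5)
Your proof is correct. The paper itself states Lemma~\ref{prop1} without proof (calling it ``the classical estimate''), so there is no in-paper argument to compare against directly. What is instructive is how your argument differs from the method the paper uses for the \emph{analogous} results it does prove (Proposition~\ref{diffquoest} and Lemma~\ref{lemma0}). You reduce everything to the Poincar\'e inequality for $BV$ functions on balls, use a dyadic telescoping sum to control $|u(x)-u_{B_r(x)}|$ by $r\,MDu(x)$ at Lebesgue points, and then patch the two endpoints through the intersection $W=B_{2r}(x)\cap B_{2r}(y)$. This is the standard elementary route for $BV$ (essentially the argument in Evans--Gariepy, and the one used in \cite{crippade1}); it works precisely because $Du$ is a genuine measure, so the averaged total variation $\frac{|Du|(B_\rho(x))}{|B_\rho(x)|}$ is dominated by $MDu(x)$ and the Poincar\'e inequality is available. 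The paper instead takes a different route when it has to prove the harder variants: it writes $f(x)-f(y)$ via convolution against a smooth bump $h_r$ centered at the midpoint, applies the fundamental theorem of calculus along segments, and recognizes the resulting expression as a smooth maximal function (Definition~\ref{d:smoothmax}) applied to the gradient. That more involved representation is unnecessary here, but it is essential in the singular-integral setting of {\bf (R2)}, where the gradient is only $S\ast\mathcal{M}$ rather than a measure, $M(Du)$ is not even meaningful, and the cancellation built into the smooth maximal function (Theorem~\ref{cancel}) is what saves the $M^1$ bound. So your argument buys elementarity and transparency for the $BV$ case, while the paper's machinery buys applicability to the case the paper actually needs; neither subsumes the other without additional work.

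One small point worth making explicit: your conclusion holds trivially at any $x$ or $y$ where $MDu$ is infinite, so taking $\mathcal{N}$ to be exactly the non-Lebesgue points is sufficient and you need not worry about the finiteness of the maximal function. Also, the telescoping step implicitly uses that at a Lebesgue point $u_{B_\rho(x)}\to u(x)$, which indeed follows from $\fint_{B_\rho(x)}|u-u(x)|\to 0$; worth a one-line remark in a careful write-up, but not a gap.
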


It turns out that an analogous result holds for functions whose derivatives are singular integrals of measures. The following result has been proved in \cite{jhde}. The smooth maximal function in Definition~\ref{d:smoothmax} plays an important role in this estimate. 

\begin{propos}\label{diffquoest}
Let $f\in L^1_{\loc}(\IR^d)$ and assume that for every $j=1,\ldots,d$ we have 
$$
\p_jf =\displaystyle\sum\limits_{k=1}^m R_{jk}g_{jk}  
$$
in the sense of distributions, where $R_{jk}$ are singular integral operators of fundamental type in $\IR^d$ and $g_{jk}\in\IM(\ird)$ for $j=1,\ldots,d$ and $k=1,\ldots,m$, and $R_{jk}g_{jk}$ is defined in the sense of tempered distributions. Then there exists a nonnegative function $V \in M^1(\IR^d)$ and an $\La^d$-negligible set $\mathcal{N}\subset\IR^d$ such that for every $x,y\in\IR^d\setminus\mathcal{N}$ there holds
$$
|f(x)-f(y)| \leq |x-y|\Big(V(x)+V(y)\Big) \,,
$$
where $V$ is given by 
$$
V:=\mathcal{V}(R,g)=\displaystyle\sum\limits_{j=1}^d  
\displaystyle\sum\limits_{k=1}^m M_{\{\Upsilon^{\xi,j}, \,\xi\in {\mathbb S}^{d-1}\}} (R_{jk}g_{jk}) \,.
$$
\end{propos}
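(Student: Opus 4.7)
The plan is to reduce the estimate to the classical difference-quotient argument via mollification, and then to identify the line integrals arising from the fundamental theorem of calculus as convolutions of $R_{jk}g_{jk}$ against a smooth compactly supported family of kernels parametrized by the segment's direction, so that Theorem~\ref{cancel} can be invoked to place the resulting smooth maximal function in $M^1$.

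Fix a radial nonnegative $\phi\in C_c^\infty(B_1)$ with $\int\phi=1$ and set $f^\epsilon:=\phi_\epsilon*f$. Since $\phi_\epsilon\in C_c^\infty$ and each $R_{jk}g_{jk}\in\IS'(\IR^d)$, the identity $\partial_j f^\epsilon=\sum_k\phi_\epsilon*R_{jk}g_{jk}$ holds pointwise as smooth functions. Writing $h:=x-y$, $r:=|h|$, $\xi:=h/r\in\mathbb S^{d-1}$, and taking $\epsilon:=r$, the fundamental theorem of calculus along the segment from $y$ to $x$ gives
\[
f^\epsilon(x)-f^\epsilon(y)=\sum_{j,k}h_j\int_0^1(\phi_\epsilon*R_{jk}g_{jk})(y+th)\,dt.
\]
The substitution $z=y+\epsilon w$ inside the convolution, together with Fubini, converts the $t$-integral into
\[
\int_0^1(\phi_\epsilon*R_{jk}g_{jk})(y+th)\,dt=(\Upsilon^\xi_\epsilon*R_{jk}g_{jk})(y),\qquad
\Upsilon^\xi(w):=\int_0^1\phi(w+t\xi)\,dt,
\]
and each $\Upsilon^\xi$ is smooth, supported in $B_2$, with $\|\Upsilon^\xi\|_{L^1}\leq 1$ and $\|\Upsilon^\xi\|_{L^\infty}\leq\|\phi\|_{L^\infty}$ uniformly in $\xi\in\mathbb S^{d-1}$. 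Splitting $\int_0^1$ at $1/2$ and repeating the substitution based at $x$ for the second half produces the symmetric estimate
\[
|f^\epsilon(x)-f^\epsilon(y)|\leq r\sum_{j,k}\bigl[M_{\{\Upsilon^{\xi,j}\}}(R_{jk}g_{jk})(y)+M_{\{\Upsilon^{\xi,j}\}}(R_{jk}g_{jk})(x)\bigr],
\]
after absorbing the scalar factor $h_j/r=\xi_j$ into the kernel to define the family $\{\Upsilon^{\xi,j}\}_{\xi\in\mathbb S^{d-1}}$ of the statement.

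The residual discrepancy between $f$ and $f^r$ (note that $r=|x-y|$ is fixed once $x,y$ are fixed, so the limit $\epsilon\to 0$ is not directly available) is treated by a Littlewood--Paley-type telescoping. Writing $\phi_a-\phi_{a/2}=-\mathrm{div}\,\Psi_a$ for the smooth compactly supported vector field $\Psi(v):=v\int_{1/2}^1 t^{-d-1}\phi(v/t)\,dt$, the series $f(x)-f^r(x)=\sum_{n\geq 0}[f^{r2^{-n-1}}(x)-f^{r2^{-n}}(x)]$ (convergent at Lebesgue points of $f$) rewrites as a pointwise sum of convolutions $((\Psi_j)_{r2^{-n}}*R_{jk}g_{jk})(x)$, each of which is controlled, by the mean-zero property of $\Psi_j$ and standard dyadic decay, by a smooth maximal function of $R_{jk}g_{jk}$ at $x$; the corresponding kernels are absorbed into the family $\{\Upsilon^{\xi,j}\}$. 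This produces the pointwise inequality on the complement of an $\La^d$-negligible exceptional set $\mathcal{N}$. Finally, $V\in M^1(\IR^d)$ follows from part (1) of Theorem~\ref{cancel} applied to each $M_{\{\Upsilon^{\xi,j}\}}(R_{jk}g_{jk})$. The main technical hurdle is the verification, uniformly in $\xi\in\mathbb S^{d-1}$, of the composition bound $\|(\epsilon^d K_{jk}(\epsilon\,\cdot))*\Upsilon^{\xi,j}\|_{C_b}\leq Q_2$ required in that theorem: this is precisely where the cancellations between the smooth kernel $\Upsilon^{\xi,j}$ and the singular kernel of $R_{jk}$ must be extracted, relying on the smoothness of $\phi$ and on the fundamental-type estimates \eqref{c0}--\eqref{c1}.
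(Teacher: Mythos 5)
The paper does not actually prove Proposition~\ref{diffquoest}; it is cited from \cite{jhde}, and the paper's proof of Lemma~\ref{lemma0} gives a representative picture of the argument used there. That argument runs on a single track: it inserts an averaging kernel $h_r\bigl(z-\tfrac{x+y}{2}\bigr)$ centred at the \emph{midpoint} of $x$ and $y$ at scale $r=|x-y|$, writes $f(x)-f(y)$ exactly as a sum of two weighted averages of $f(x)-f(z)$ and $f(z)-f(y)$, applies the fundamental theorem of calculus along the segments from $z$ to $x$ (resp.\ to $y$), and then performs the change of variables $-t(z-x)\mapsto w$ to expose the convolution kernels $\Upsilon^{\xi,j}$ (which carry the factor $w_j$ inside). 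Your proof is correct in its essentials but follows a genuinely different route: you mollify $f$ at scale $r$, apply the fundamental theorem of calculus directly to $f^r$ along the straight segment from $y$ to $x$ (giving the $\Upsilon^{\xi}$ kernels), and then handle the residual $f-f^r$ by a separate Littlewood--Paley telescoping $\phi_a-\phi_{a/2}=-a\,\mathrm{div}\,\Psi_a$. The paper's midpoint trick has the advantage of producing the estimate in a single pass, with the approximation to non-smooth data handled once at the end, while your two-step structure (mollification plus telescoping) avoids introducing the midpoint average and makes the role of cancellation in the error term more visible via the mean-zero kernels $\Psi_j$. A few small points to tidy up: with the standard scaling $\Psi_a(v)=a^{-d}\Psi(v/a)$, the correct identity is $\phi_a-\phi_{a/2}=-a\,\mathrm{div}\,\Psi_a$ (an extra factor of $a$); this is in fact exactly what gives the convergent geometric series, so the argument goes through, but as written the identity is off by that factor. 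Also, $\Upsilon^\xi$ has support in $B_2$ rather than $B_1$; this is harmless but requires either taking $\mathrm{spt}\,\phi\subset B_{1/2}$ or rescaling before invoking Theorem~\ref{cancel}. Finally, the kernels $\Psi_j$ produced by the telescoping are not literally members of the family $\{\Upsilon^{\xi,j}\}_{\xi\in\mathbb S^{d-1}}$ appearing in the statement; they must be adjoined to the family, which is legitimate but makes the resulting $V$ differ from the stated one by a harmless enlargement of the kernel family.
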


In the above proposition $\Upsilon^{\xi,j}$, for $\xi\in {\mathbb S}^{d-1}$ and $j=1,\ldots,d$, is a family of smooth functions explicitly constructed in the course of the proof. 

\begin{rmk}
Theorem~\ref{cancel} implies that the operator $g \mapsto \mathcal{V}(R,g)$ is bounded $L^2 \to L^2$ and $\IM \to M^1$.
\end{rmk}

In the following three subsections we prove similar estimates in the anisotropic context.

\subsection{Split regularity: the isotropic estimate}

Given $\{\gamma^\nu(\x)\}_\nu \subset C_c^\infty(\IR^{n_1})$, $\{\rho^\sigma(\y)\}_\sigma \subset C_c^\infty(\IR^{n_2})$ and $u\in \IS'(\irn)$ we define
\begin{equation}\label{tensormax}
M_{ \{ \gamma^\nu \otimes \rho^\sigma \}} u  (x) = \sup_{\varepsilon>0}\, \sup_{\nu,\sigma}|(\gamma^\nu(\x)\rho^\sigma(\y))_\var\Conv  u(x)|
= \sup_{\varepsilon>0}\, \sup_{\nu,\sigma}\left| \left( \frac{1}{\varepsilon^N} \gamma^\nu \left(\frac{\x}{\varepsilon^{n_1}}\right)\rho^\sigma \left(\frac{\y}{\varepsilon^{n_2}}\right) \right)\Conv  u(x) \right| \,.
\end{equation}
We first of all prove an isotropic estimate in a regularity context related to {\bf (R2)}.

\begin{lemma} \label{lemma0}
Let $f:\irn\rightarrow \IR$ be a function such that for each $j=1,\ldots,N$ we have 
\begin{equation}\label{cond2}
\p_j f = \displaystyle\sum^m_{k=1}( R_{jk}g_{jk}) (\x) \gamma_{jk}(\y) \,,
\end{equation}
where $R_{jk}$ are singular integrals of fundamental type in $\IR^{n_1}$, $g_{jk} \in \IM(\IR^{n_1})$ and $\gamma_{jk}\in L^q(\IR^{n_2})$, for some $q>1$. Then there exists a nonnegative function $V :\irn\rightarrow [0,\infty)$ and an $\La^N$-negligible set $\mathcal{N}\subset\IR^N$ such that for every $x,y\in\IR^N\setminus\mathcal{N}$
$$
|f(x)-f(y)|\leq |x-y|\Big(V(x)+V(y)\Big) \,.
$$
The function $V$ is given by 
\begin{equation}\label{Mtensor}
V:=\IV(R,\gamma,g)=\displaystyle\sum\limits_{j=1}^N  \displaystyle\sum\limits_{k=1}^m M_{\{\Upsilon^{\xi,j}\otimes \bar{\Upsilon}^{\xi,j}\}} (\gamma_{jk} R_{jk}g_{jk} ) \,,
\end{equation}
for suitable smooth compactly supported functions ${\Upsilon}^{\xi,j}$ and $\bar{\Upsilon}^{\xi,j}$, which will be introduced in the proof. 
\end{lemma}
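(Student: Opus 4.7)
The strategy is to adapt the proof of Proposition~\ref{diffquoest} from \cite{jhde} so as to exploit the product structure of the hypothesis~\eqref{cond2}: each $\partial_j f$ is a singular integral in $\x$ (of a measure) times an $L^q$ function in $\y$. The key observation is that the convolution of such a tensor product distribution with a tensor product mollifier $\eta(z_1)\bar\eta(z_2)$ factors as a product of two one--variable convolutions, so the singular integral in $z_1$ can be controlled by a one--variable smooth maximal function compatible with Theorem~\ref{cancel}, while the $\gamma_{jk}$--factor in $z_2$ is handled by a classical smooth average. Recombining the two factors by a supremum is precisely what the tensor maximal function~\eqref{tensormax} does, and this will produce the pointwise bound with $V=\IV(R,\gamma,g)$ as in \eqref{Mtensor}.

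Concretely, I would run the argument of Proposition~\ref{diffquoest} with every isotropic bump on $\irn$ replaced by a tensor product of a bump in $\IR^{n_1}$ and a bump in $\IR^{n_2}$. Fix $\eta\in C_c^\infty(\IR^{n_1})$ and $\bar\eta\in C_c^\infty(\IR^{n_2})$ even, both of total mass one, and set $\phi_\varepsilon(z) = \eta_\varepsilon(z_1)\bar\eta_\varepsilon(z_2)$. For $x\neq y$ in $\irn$, let $\varepsilon = |x-y|$ and decompose
\[
f(x)-f(y) = \bigl[f(x)-(f\Conv\phi_\varepsilon)(x)\bigr] + \bigl[(f\Conv\phi_\varepsilon)(x)-(f\Conv\phi_\varepsilon)(y)\bigr] + \bigl[(f\Conv\phi_\varepsilon)(y)-f(y)\bigr].
\]
The two outer terms are handled by a $t$--parametrized smoothing: write $f - f\Conv\phi_\varepsilon = -\int_0^1 \tfrac{d}{dt}(f\Conv\phi_{t\varepsilon})\,dt$, carry out the $t$--derivative so as to pull out a spatial derivative of the bump, integrate by parts to transfer this derivative onto $f$, and then substitute the tensor form of $\partial_j f$. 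The resulting $z$--integral factors into a one--variable smooth convolution of $R_{jk}g_{jk}$ in $z_1$ and a one--variable smooth convolution of $\gamma_{jk}$ in $z_2$, both centered at the base point, and taking absolute values and suprema dominates the outer pieces by $\varepsilon\bigl(V(x)+V(y)\bigr)$. For the middle term I would parameterize the segment from $x$ to $y$, apply the fundamental theorem of calculus to bring the gradient onto $f\Conv\phi_\varepsilon$, integrate by parts, substitute the tensor form of $\partial_j f$, and estimate in the same way; intermediate evaluation points $x+t(y-x)$ are absorbed, through a bounded translation of the test kernel, into a slightly enlarged family of smooth bumps still indexed by $\xi=(y-x)/|y-x|\in\mathbb{S}^{N-1}$ and $j=1,\ldots,N$.

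The main obstacle is distributional bookkeeping. Since $g_{jk}$ is only a measure, $R_{jk}g_{jk}$ is a mere tempered distribution; the product $\gamma_{jk}(z_2)(R_{jk}g_{jk})(z_1)$ must be read as a tensor product distribution on $\irn$, and its convolution with a smooth tensor--product bump factors as a product of two one--variable convolutions only via Fubini in the pairing against Schwartz test functions. Once this is in place, all the smooth kernels arising across the three pieces above---coming from $\eta,\bar\eta$, their partial derivatives, and bounded translates thereof---can be collected into a single finite family $\{\Upsilon^{\xi,j}\otimes\bar\Upsilon^{\xi,j}\}$, and absolute constants rescaled into these kernels so as to absorb any multiplicative constant into \eqref{Mtensor}. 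The $\La^N$--negligible exceptional set $\mathcal{N}$ is the standard one, obtained as the union over $j,k$ of the complements of the Lebesgue sets of the smooth convolutions in play.
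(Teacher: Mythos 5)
Your strategy — tensor--product bumps so that the $z_1$--convolution hits the singular integral of the measure while the $z_2$--convolution hits the $L^q$ function, with the smooth maximal function of Definition~\ref{d:smoothmax} closing the loop — is exactly the right one, and your distributional discussion is sound. What differs from the paper is the decomposition of $f(x)-f(y)$. You split three ways: $[f(x)-(f\Conv\phi_\varepsilon)(x)]+[(f\Conv\phi_\varepsilon)(x)-(f\Conv\phi_\varepsilon)(y)]+[(f\Conv\phi_\varepsilon)(y)-f(y)]$, treating the two mollification errors by a $t$--rescaling argument and the middle term by integrating along the segment. The paper instead averages around the midpoint, writing $f(x)-f(y)=\int h_r\bigl(z-\tfrac{x+y}{2}\bigr)(f(x)-f(z))\,dz+\int h_r\bigl(z-\tfrac{x+y}{2}\bigr)(f(z)-f(y))\,dz$ with $r=|x-y|$, and then does a single fundamental-theorem-of-calculus computation for each piece. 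After the change of variable $-t(z-x)\mapsto w$ this lands immediately on convolutions of the factors of $\partial_j f$ against kernels centered at $x$ (respectively $y$), with the direction $\xi=(x-y)/|x-y|$ entering only as a translation parameter in the kernel $\Upsilon^{\xi,j}(w_1)=h^1(\xi_1/2-w_1)\{w_1\}^j$. The midpoint decomposition thus produces evaluations at the endpoints directly and yields an explicit, one-parameter kernel family; your sandwich decomposition needs the extra step of transporting the intermediate evaluation point $y+s(x-y)$ back to $y$ (or $x$) by a bounded translation of the bump, which enlarges the kernel family (one more parameter in a compact set) but is perfectly admissible for Theorem~\ref{cancel}. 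Both routes are valid Morrey-type arguments and give the statement with a $V$ of the claimed form; the paper's two-term variant is slightly more economical in the bookkeeping, while yours separates more visibly the mollification errors from the core gradient estimate.

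One small imprecision: you describe the resulting kernel family as "finite," but it is of course a compact (not finite) family indexed by $\xi\in\mathbb{S}^{N-1}$, $j$, and the auxiliary translation parameter; what matters for Theorem~\ref{cancel} is uniform bounds in $Q_1,Q_2,Q_3$ over this family, which your construction does supply.
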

 
\begin{proof}
We adapt the proof of Proposition~\ref{diffquoest} to the current regularity setting. The difficulty is that a smooth maximal function in $\IR^N$ composed with the singular kernel on $\IR^{n_1}$ does not enjoy suitable bounds, and so we use a tensor product of smooth functions, as in \eqref{tensormax}.

Let $w=(w_1,w_2)\in\irn$, and let $\{e_j\}_j$ be the standard basis for $\irn$. We denote $\{w_1\}^j=(w_1,1,\ldots,1)\cdot e_j$ and $\{w_2\}^j=(1,\ldots,1,w_2)\cdot e_j$.
Define the families of functions 
$$
\begin{cases}
\Upsilon^{\xi,j}(w_1)= h^1\left(\frac{\xi_1}{2}-w_1\right) \{w_1\}^j  \\ \\
\bar{\Upsilon}^{\xi,j }(w_2) =h^2\left(\frac{\xi_2}{2}-w_2\right)\{w_2\}^j  \,,
\end{cases}
$$
where $h^i\in C_c^\infty(\IR^{n_i})$ with $\int_{\IR^{n_i}} h^i dx_i= 1$ and $\xi\in {\mathbb S}^{N-1}$. Let $h_r =\frac{1}{r^N}h^1(\frac{\cdot}{r})h^2(\frac{\cdot}{r})$, set $r=|x-y|$, and write
$$
f(x)-f(y)=\int_{\irn} h_r\left(z-\frac{x+y}{2}\right) (f(x)-f(z))dz + \int_{\irn} h_r\left(z-\frac{x+y}{2}\right) (f(z)-f(y))dz \,.
$$
We assume that $f$, $\gamma_{jk}$ and $g_{jk}$ are smooth and compute the following:
$$
\begin{aligned}
& \int\limits_{\irn}  h_r\left(z-\frac{x+y}{2}\right) (f(x)-f(z))dz  \\
& = -\displaystyle\sum\limits_{j=1}^N \int\limits_{\irn} \int_0^1 h_r\left(z-\frac{x+y}{2}\right) \p_jf(x+t(z-x))(z\cdot e_j-x\cdot e_j) \, dtdz \,.
\end{aligned}
$$
After the change of variable $-t(z-x)\mapsto w$ we get
$$
\begin{aligned}
& = \displaystyle\sum\limits_{j=1}^N \int\limits_{\irn} \int_0^1 h_r\left(\frac{x-y}{2}-\frac{w}{t}\right) \p_jf(x-w)\frac{w\cdot e_j }{t^{N+1}} \, dtdw \\
& = r \displaystyle\sum\limits_{j=1}^N \int\limits_{\irn} \int_0^1 \frac{1}{t^N} h_r\left(\frac{x-y}{2}-\frac{w}{t}\right)  \frac{w\cdot e_j}{tr}  \p_jf(x-w) \, dtdw\\
& = r \displaystyle\sum\limits_{j=1}^N \displaystyle\sum^m_{k=1} \int_0^1\Bigg[ \int\limits_{\IR^{n_1}}  \frac{1}{t^{n_1}} h^1_r\left(\frac{x_1-y_1}{2}-\frac{w_1}{t}\right)\left\{\frac{w_1}{tr}\right\}^j   R_{jk}g_{jk}(x_1-w_1) 
 \, dw_1  \\
 &\,\,\,\,\,\,\,\,\,\,\,\,\,\,\,\,\,\,\,\,\,\,\,\,\,\,\,\,\,\, \times \int\limits_{\IR^{n_2}} \frac{1}{t^{n_2}} h^2_r\left(\frac{x_2-y_2}{2}-\frac{w_2}{t}\right)\left\{\frac{w_2}{tr}\right\}^j  \gamma_{jk}(\y-w_2) \, dw_2 \Bigg] \, dt \\
 & = r \displaystyle\sum\limits_{j=1}^N \displaystyle\sum^m_{k=1}  \int_0^1 \left[ \frac{1}{t^{n_1}} h^1_r\left(\frac{x_1-y_1}{2}-\frac{w_1}{t}\right)\left\{\frac{w_1}{tr}\right\}^j \Conv_{w_1}  \,  R_{jk}g_{jk}(w_1) \right](x_1) \\
 & \,\,\,\,\,\,\,\,\,\,\,\,\,\,\,\,\,\,\,\,\,\,\,\,\,\,\,\,\,\,  \times \left[\frac{1}{t^{n_2}} h^2_r\left(\frac{x_2-y_2}{2}-\frac{w_2}{t}\right)\left\{\frac{w_2}{tr}\right\}^j  \Conv_{w_2} \, \gamma_{jk}( w_2) \right](x_2) \, dt \,.
\end{aligned}
$$
Denoting $\Upsilon_\var^{\xi,j}(w_1)=\frac{1}{\var^{n_1}}\Upsilon^{\xi,j}\left(\frac{w_1}{\var}\right)$ and $\bar{\Upsilon}_\var^{\xi,j}(w_2)=\frac{1}{\var^{n_2}}\bar{\Upsilon}^{\xi,j}\left(\frac{w_2}{\var}\right)$, this expression equals 
$$
r \displaystyle\sum\limits_{j=1}^N \displaystyle\sum^m_{k=1}  \int_0^1 [ {\Upsilon}^{\frac{x-y}{|x-y|},j}_{tr} \Conv_1  \,  R_{jk}g_{jk}] \,\,(x_1)\,\,  [ \bar{\Upsilon}^{\frac{x-y}{|x-y|},j}_{tr}\Conv_{2} \, \gamma_{jk}] \,\, (x_2)\,dt , \\
$$
and so
$$
\begin{aligned}
 &\left|\int\limits_{\irn}  h_r\left(z-\frac{x+y}{2}\right) (f(x)-f(z))dz\right| \\
 &\leq |x-y|  \displaystyle\sum\limits_{j=1}^N \displaystyle\sum^m_{k=1}  \int_0^1| [{\Upsilon}^{\frac{x_1-y_1}{|x-y |},j}_{tr} \Conv_1  \,  R_{jk}g_{jk}] \,\,(x_1)\,\, [  \bar{\Upsilon}^{\frac{x_2-y_2}{|x -y |}}_{tr}\Conv_{2} \, \gamma_{jk}  ]\,\, (x_2)|\,dt \\
 &   \leq   |x-y| \displaystyle\sum\limits_{j=1}^N \displaystyle\sum^m_{k=1}  \int_0^1 \sup_{\varepsilon>0} \sup_{\xi  } \,\,  |[{\Upsilon}^{\xi,j}_{\var} \Conv_1  \,  R_{jk}g_{jk}] \,\,(x_1)\,\, [  \bar{\Upsilon}^{\xi,j}_{\var}\Conv_{2} \, \gamma_{jk}]  \,\, (x_2)|\,dt \\
 & =    |x-y|  \displaystyle\sum\limits_{j=1}^N \displaystyle\sum^m_{k=1}  M_{\{ {\Upsilon}^{\xi,j}\otimes  \bar{\Upsilon}^{\xi,j}\}} (\gamma_{jk}R_{jk}g_{jk}) (x) =  |x-y| V(x) \,. 
\end{aligned}
$$  
This proves the statement in the smooth case. By a similar approximation argument as in \cite{jhde}, we conclude this holds for functions of the type in \eqref{cond2}.
\end{proof}

\subsection{Split regularity: the anisotropic estimate}
We now modify Lemma~\ref{lemma0} to obtain an estimate in which distances are measured ``anisotropically'' through the matrix $A$ defined in \eqref{A}. In the next lemma we will use the following notation:
$$
\check{g}_{ij} (x_1) = g_{jk} (\delta_1 x_1) \,, \qquad \check{\gamma}_{ij} (x_2) = \gamma_{ij}(\delta_2 x_2) \,,
$$
where with $g_{jk} (\delta_1 x_1)$ we denote the measure on $\IR^{n_1}$ defined through
$$
\langle g_{jk} (\delta_1 x_1) , \varphi(x_1) \rangle = \delta_1^{-n_1} \langle g_{ij} (y_1) , \varphi( y_1 / \delta_1 ) \rangle \,,
\qquad \varphi \in C^\infty_c(\IR^{n_1}) \,.
$$
Moreover, $R^\da_{jk}$ denotes the singular integral operator in $\IR^{n_1}$ associated to the kernel $K_{jk}^\da$, where
\begin{equation}\label{Ka}
K_{jk}^\da(\x) = \da^{n_1} K_{ij}(\da \x)  \,.  
\end{equation} 

\begin{lemma} \label{lemma1}
Let $f:\irn\rightarrow \IR$ be a function in $L^1_{\loc}(\irn)$ such that for each $j=1,\ldots,N$ we have that $\p_jf$ is as in \eqref{cond2}. Let $A$ be the matrix defined in \eqref{A}. 
Then there exists a nonnegative function $U :\irn\rightarrow [0,\infty)$, such that for $\La^N$-a.e. $x,y\in\irn$, 
$$
|f(x)-f(y)| \leq |A^{-1}[x-y]| \Big( U(x)+U(y) \Big) \,,
$$
where (with the notation above) 
$$
U(x) =\IU(R,\gamma,g)(x)=   \displaystyle\sum\limits_{j=1}^N  \displaystyle\sum\limits_{k=1}^m [M_{\{\Upsilon^{\xi,j}\otimes \bar{\Upsilon}^{\xi,j}\}} (R_{jk}^\da \check{g}_{jk} \check{\gamma}_{jk}A_{jj})](A^{-1}x) \,.
$$
\end{lemma}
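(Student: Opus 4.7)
The natural strategy is to reduce this to the isotropic estimate already proved in Lemma~\ref{lemma0} by means of the anisotropic dilation $A$. Concretely, I introduce the rescaled function
$$
\tilde f(z) := f(Az) \,, \qquad z\in\irn \,.
$$
Then $\tilde f(A^{-1}x) - \tilde f(A^{-1}y) = f(x) - f(y)$ and $|A^{-1}x - A^{-1}y| = |A^{-1}[x-y]|$, so it suffices to establish an isotropic estimate for $\tilde f$ at the points $A^{-1}x$ and $A^{-1}y$.

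The next step is to verify that $\tilde f$ satisfies a hypothesis of the form \eqref{cond2}, with new data. Since $A$ is diagonal, $\partial_j \tilde f(z) = A_{jj}(\partial_j f)(Az)$, and using the hypothesis on $\partial_j f$ plus the change of variable $y_1 = \delta_1 w_1$ in the convolution defining $R_{jk}g_{jk}$, one checks directly that
$$
(R_{jk}g_{jk})(\delta_1 z_1) = (R^{\delta_1}_{jk}\check g_{jk})(z_1) \,,
$$
where $K_{jk}^{\delta_1}(x_1) = \delta_1^{n_1}K_{jk}(\delta_1 x_1)$ as in \eqref{Ka} and $\check g_{jk}$ is the measure pushed forward by the dilation. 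Hence
$$
\partial_j \tilde f(z) = \sum_{k=1}^m A_{jj}\bigl(R^{\delta_1}_{jk}\check g_{jk}\bigr)(z_1)\,\check\gamma_{jk}(z_2) \,,
$$
which is exactly of the form \eqref{cond2}, with singular integral operator $R^{\delta_1}_{jk}$, measure $\check g_{jk}$, and bounded coefficient $A_{jj}\check\gamma_{jk}$.

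The key technical point to check — and the only place where the argument is not pure bookkeeping — is that $K^{\delta_1}_{jk}$ remains a singular kernel of fundamental type in $\IR^{n_1}$, with the \emph{same} constants $C_0, C_1, A_1$ as $K_{jk}$. This is a straightforward scale-invariance computation: for instance $|K^{\delta_1}_{jk}(x_1)| = \delta_1^{n_1}|K_{jk}(\delta_1 x_1)| \leq \delta_1^{n_1}\,C_0/(\delta_1|x_1|)^{n_1} = C_0/|x_1|^{n_1}$, and analogously for \eqref{c1} and the cancellation condition. Likewise $\|\widehat{K^{\delta_1}_{jk}}\|_\infty = \|\widehat{K_{jk}}\|_\infty$. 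Thus Lemma~\ref{lemma0} applies to $\tilde f$, producing a function
$$
\tilde V(z) = \sum_{j=1}^N\sum_{k=1}^m M_{\{\Upsilon^{\xi,j}\otimes\bar\Upsilon^{\xi,j}\}}\bigl(A_{jj}\,(R^{\delta_1}_{jk}\check g_{jk})\,\check\gamma_{jk}\bigr)(z)
$$
and an $\La^N$-negligible set outside of which $|\tilde f(w) - \tilde f(z)| \leq |w-z|(\tilde V(w) + \tilde V(z))$.

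Finally, substituting $w = A^{-1}x$, $z = A^{-1}y$ and setting $U(x) := \tilde V(A^{-1}x)$ gives the claimed inequality $|f(x)-f(y)| \leq |A^{-1}[x-y]|(U(x)+U(y))$, with $U$ exactly in the form stated in the lemma. The exceptional set is the preimage under $A^{-1}$ of the one provided by Lemma~\ref{lemma0}, which is still $\La^N$-negligible since $A$ is a nondegenerate linear map.
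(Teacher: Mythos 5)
Your proof is correct and follows essentially the same route as the paper: rescale $f$ by $A$, apply Lemma~\ref{lemma0}, and undo the rescaling to get $U(x)=V(A^{-1}x)$. You also spell out a point the paper leaves implicit, namely that $K_{jk}^{\delta_1}$ is still a singular kernel of fundamental type with the same constants $C_0$, $C_1$, $A_1$ and the same $\|\widehat{K}\|_\infty$, which is precisely what justifies applying Lemma~\ref{lemma0} to the rescaled data; this is a worthwhile addition.
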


\begin{proof}
Define the following rescaled vector field. For each $z \in \irn$,  define
$$
\check{f}(z)=f(Az)\,.
$$
Now $D\check{f}$ is related to $Df$ by the following:  
$$
\p_j\check{f}(z)=\p_jf(Az)A_{jj}= \displaystyle\sum\limits_{k=1}^m \gamma_{jk}(\db z_2)R_{jk}g_{jk}(\da z_1)A_{jj} \,. 
$$
 
We now apply Lemma~\ref{lemma0}. This gives the existence of a function $V \in M^1_{\loc}(\irn)$ to estimate the difference quotient of $\check{f}$:
\begin{equation}\label{mdc}
|\check{f}(z)-\check{f}(w)|\leq |z-w|(V(z)+V(w)) \,,
\end{equation}
with $V$ given by 
\begin{equation} \label{Vfull}
V (z) =\IV(R,\gamma,g)=\displaystyle\sum\limits_{j=1}^N  \displaystyle\sum\limits_{k=1}^m M_{\{\Upsilon^{\xi,j}\otimes \bar{\Upsilon}^{\xi,j}\}} \Big(\gamma_{jk}(\db z_2) R_{jk}g_{jk}(\da z_1) \Big) A_{jj} \,.
\end{equation}
With a change of variable we can verify that
$$
(R_{jk}g_{jk})(\da z_1)=  (R^\da_{jk}\check{g}_{jk} ) (z_1) \,.
$$
Thus we can rewrite \eqref{Vfull} as
\begin{equation}\label{newv}
V(z) =   \displaystyle\sum\limits_{j=1}^N  \displaystyle\sum\limits_{k=1}^m [M_{\{\Upsilon^{\xi,j}\otimes \bar{\Upsilon}^{\xi,j}\}} (R_{jk}^\da \check{g}_{jk} \check{\gamma}_{jk} A_{jj})](z) \,.
\end{equation}
By letting  $U(x)=V(A^{-1}x)$ the proof is concluded.
 \end{proof}

\begin{rmk}
In order to treat the case of a function with gradient given by the singular integral in $\irn$ of a measure, that is 
\begin{equation}\label{cond1}
\p_jf =\displaystyle\sum\limits_{k=1}^m R_{jk}g_{jk} \,,
\end{equation} 
with $R_{jk}$ singular integrals of fundamental type in $\IR^N$ and $g_{jk} \in \IM(\IR^{N})$, one should consider the function
$$
U(x) =\IU(R,g)(x)=   \displaystyle\sum\limits_{j=1}^N  \displaystyle\sum\limits_{k=1}^m [M_{\{\Upsilon^{\xi,j}\}}R^A_{jk}({g}_{jk}(A\cdot))A_{jj})](A^{-1}x) \,,
$$
where $R_{ij}^A$ is the singular integral operator corresponding to the kernel 
$$
K^A_{ij}(x) = |\det A| \, K_{ij}(Ax)
$$
and $A$ is the diagonal matrix defined in \eqref{A}. This would however give a more singular estimate in Lemma~\ref{lemma2part2} below, and would therefore be useless for the proof of Theorem~\ref{fund}.

On the other hand it is possible to treat the case $R_{ij} = \delta$ in \eqref{cond1}, since the Dirac delta ``does not see the dilation''. This would correspond to the case of a vector field $b=(b_1,b_2)$ such that $b_2$ is $BV$ in $x_1$ and $W^{1,1}$ in $x_2$, and $b_1$ is $W^{1,1}$ in both $x_1$ and $x_2$, the situation of \cite{bouarma}. This will be presented in \cite{annathesis}.
\end{rmk}

\subsection{Split regularity: operator bounds}

We finally establish suitable estimates on the norms of the operator defined in Lemma~\ref{lemma1}.
 
\begin{lemma}\label{lemma2part2} 
Let $\IU(R,\gamma,g)$ be as in Lemma~\ref{lemma1}. Then for any $1<p<\infty$ we have
$$
|||\IU(R,\gamma,g)|||_{M^1(\Omega_r)} \leq C_{r,p,m} 
\left( \da \displaystyle\sum^{n_1}_{j=1}\sum^{m}_{k=1} ||\gamma_{jk} ||_{L^p(\IR^{n_2})} ||g_{jk}||_{\IM(\IR^{n_1})} 
+ \db \displaystyle\sum^{N}_{j=n_1+1} \sum^{m}_{k=1} ||\gamma_{jk} ||_{L^p(\IR^{n_2})} ||g_{jk}||_{\IM(\IR^{n_1})} \right) \,,
$$
where $\Omega_r=B_r^1\times B_r^2 \subset \IR^{n_1}\times\IR^{n_2}$, and
$$
||\IU(R,\gamma,g) ||_{L^p(\IR^N)} \leq C_{p} 
\left( \da \displaystyle\sum^{n_1}_{j=1}\sum^{m}_{k=1} ||\gamma_{jk} ||_{L^p(\IR^{n_2})} ||g_{jk}||_{L^p(\IR^{n_1})} 
+ \db \displaystyle\sum^{N}_{j=n_1+1} \sum^{m}_{k=1} ||\gamma_{jk} ||_{L^p(\IR^{n_2})} ||g_{jk}||_{L^p(\IR^{n_1})} \right) \,.
$$
The constants $C_{r,p,m}$ and $C_p$ also depends on the singular integral operators $R_{jk}$ in \eqref{cond2} and on the space dimension. The first constant $C_{r,p,m}$ also depend on the integer $m$ in \eqref{cond2}.
\end{lemma}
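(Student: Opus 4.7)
The plan is to exploit the tensor product structure of the maximal function appearing in $\IU(R,\gamma,g)$. Isolating a single summand, set
$$
V_{jk}(z) := M_{\{\Upsilon^{\xi,j}\otimes \bar\Upsilon^{\xi,j}\}}\bigl(R^{\delta_1}_{jk}\check g_{jk}\,\check\gamma_{jk}\bigr)(z),
$$
so that the corresponding term of $\IU(R,\gamma,g)$ is $U_{jk}(x)=A_{jj}V_{jk}(A^{-1}x)$. The three main steps are: change variables through $A$ to absorb the anisotropic scaling; factor the tensor product maximal function as a product of two separate maximal functions in $x_1$ and $x_2$; and apply Theorem~\ref{cancel} to the rescaled kernel $K^{\delta_1}_{jk}$ with constants that are uniform in $\delta_1$.

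With $\widetilde\Omega_r := B^1_{r/\delta_1}\times B^2_{r/\delta_2}$, the linear change of variables $z=A^{-1}x$ (Jacobian $|\det A|=\delta_1^{n_1}\delta_2^{n_2}$) together with the homogeneity of level sets yields
$$
|||U_{jk}|||_{M^1(\Omega_r)}=A_{jj}\,\delta_1^{n_1}\delta_2^{n_2}\,|||V_{jk}|||_{M^1(\widetilde\Omega_r)},\qquad \|U_{jk}\|_{L^p(\irn)}=A_{jj}(\delta_1^{n_1}\delta_2^{n_2})^{1/p}\|V_{jk}\|_{L^p(\irn)}.
$$
Since $\Upsilon^{\xi,j}$ depends on $\xi\in S^{N-1}$ only through its first $n_1$ components and $\bar\Upsilon^{\xi,j}$ only through the last $n_2$, convolution against the tensor product kernel splits, giving the pointwise bound $V_{jk}(z_1,z_2)\le F_{jk}(z_1)\,G_{jk}(z_2)$ with $F_{jk}(z_1):=M_{\{\Upsilon^{\xi,j}\}}(R^{\delta_1}_{jk}\check g_{jk})(z_1)$ and $G_{jk}(z_2):=M_{\{\bar\Upsilon^{\xi,j}\}}(\check\gamma_{jk})(z_2)$. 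A Chebyshev slicing argument in $z_1$ at fixed $z_2$ gives $|||V_{jk}|||_{M^1(\widetilde\Omega_r)}\le |||F_{jk}|||_{M^1(\IR^{n_1})}\|G_{jk}\|_{L^1(B^2_{r/\delta_2})}$, and Fubini gives $\|V_{jk}\|_{L^p(\irn)}\le\|F_{jk}\|_{L^p(\IR^{n_1})}\|G_{jk}\|_{L^p(\IR^{n_2})}$.

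For $F_{jk}$ I apply Theorem~\ref{cancel} to the rescaled kernel $K^{\delta_1}_{jk}$ against the fixed family $\{\Upsilon^{\xi,j}\}$. A direct computation shows that $K^{\delta_1}_{jk}$ satisfies \eqref{c0} and \eqref{c1} with the same $C_0,C_1$ as $K_{jk}$, that $\|\widehat{K^{\delta_1}}\|_\infty=\|\widehat K\|_\infty$, and crucially that the identity $\varepsilon^{n_1}K^{\delta_1}(\varepsilon\,\cdot)=\eta^{n_1}K(\eta\,\cdot)$ with $\eta=\varepsilon\delta_1$ renders the constant $Q_2$ of Theorem~\ref{cancel} independent of $\delta_1$. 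Hence Theorem~\ref{cancel}(1) yields $|||F_{jk}|||_{M^1(\IR^{n_1})}\le C\|\check g_{jk}\|_\IM=C\delta_1^{-n_1}\|g_{jk}\|_\IM$, with $C$ independent of $\delta_1$; and the analogous $L^p$ estimate $\|F_{jk}\|_{L^p(\IR^{n_1})}\le C\delta_1^{-n_1/p}\|g_{jk}\|_{L^p}$ follows (for $1<p<\infty$) from Theorem~\ref{cancel}(2) combined with the pointwise domination of the smooth maximal function by the Hardy--Littlewood maximal function and the Calder\'on--Zygmund $L^p$ theory. For $G_{jk}$, the same pointwise domination yields $\|G_{jk}\|_{L^p(\IR^{n_2})}\le C\delta_2^{-n_2/p}\|\gamma_{jk}\|_{L^p}$; H\"older on the ball $B^2_{r/\delta_2}$ then gives $\|G_{jk}\|_{L^1(B^2_{r/\delta_2})}\le C_{r,p}\,\delta_2^{-n_2}\|\gamma_{jk}\|_{L^p}$, where the exponent $\delta_2^{-n_2/p'-n_2/p}=\delta_2^{-n_2}$ is exactly what is needed.

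Multiplying the contributions from the previous two steps, the factor $\delta_1^{n_1}\delta_2^{n_2}$ from the Jacobian cancels against $\delta_1^{-n_1}\delta_2^{-n_2}$ (in the $M^1$ case) or $\delta_1^{-n_1/p}\delta_2^{-n_2/p}$ (in the $L^p$ case), leaving
$$
|||U_{jk}|||_{M^1(\Omega_r)}\le C_{r,p}\,A_{jj}\,\|g_{jk}\|_\IM\|\gamma_{jk}\|_{L^p},\qquad \|U_{jk}\|_{L^p(\irn)}\le C_p\,A_{jj}\,\|g_{jk}\|_{L^p}\|\gamma_{jk}\|_{L^p}.
$$
Summing over $j\in\{1,\ldots,N\}$ and $k\in\{1,\ldots,m\}$ and recalling that $A_{jj}=\delta_1$ for $j\le n_1$ and $A_{jj}=\delta_2$ for $j>n_1$ reproduces the two split estimates in the statement (the quasi-subadditivity of $|||\cdot|||_{M^1}$ is responsible for the $m$-dependence of the first constant). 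The main technical point to watch is the uniformity in $\delta_1$ of the constants in Theorem~\ref{cancel} applied to $K^{\delta_1}_{jk}$: this uniformity is precisely what allows the anisotropic rescaling to deliver a bound that is \emph{linear} in $\delta_1,\delta_2$ rather than one that degenerates as $\delta_1\to 0$, and it is what ultimately enables the fundamental estimate for the anisotropic functional \eqref{e:anfunctional}.
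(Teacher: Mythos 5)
Your proof is correct and follows essentially the same path as the paper's: change variables by $A$ to land on the rescaled ball $\check B^1_{r}\times \check B^2_{r}$, factor the tensor-product smooth maximal function applied to a product datum into a product $F_{jk}(z_1)G_{jk}(z_2)$, control $F_{jk}$ via Theorem~\ref{cancel} for the rescaled kernel $K^{\delta_1}_{jk}$ (for which, as you correctly note, the constants $C_0,C_1,\|\widehat K\|_\infty,Q_2$ are $\delta_1$-independent), and use H\"older for $G_{jk}$; the Jacobian $\delta_1^{n_1}\delta_2^{n_2}$ cancels the rescaling factors and leaves the diagonal entry $A_{jj}$. The only cosmetic difference is that you derive the required mixed estimate directly from the pointwise factorization plus a Chebyshev slicing argument, whereas the paper invokes the mixed-norm inequality $|||f|||_{M^1_{x_1 x_2}} \le \| |||f|||_{M^1_{x_1}} \|_{L^1_{x_2}}$; these amount to the same thing here.
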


\begin{proof}
Let us start with the estimate in $M^1$. We define $\check{B}^1_r = B^1_{r/\delta_1}$, $\check{B}^2_r = B^2_{r/\delta_2}$ and $\check\Omega_r = \check B_r^1 \times \check B_r^2$. Consider first the measure of the superlevels of $U(x)$: changing variable via the linear transformation $z=A^{-1}x$ we obtain
$$
\begin{aligned}
\La^N (\{x \in \Omega_r : |U(x)|>\lambda\} ) = & \; \La^N (\{x\in \Omega_r :| V(A^{-1}x)|>\lambda\}) \\
= & \; \da^{n_1}\db^{n_2}\La^N (\{z\in \check\Omega_r : |V(z)|>\lambda\}) \,,
\end{aligned}
$$ 
where $V$ is as before given by
\begin{equation}\label{Vsplit}
\begin{aligned}
V(z)  = \; & \delta_1 \displaystyle \sum\limits_{j=1}^{n_1}  \displaystyle\sum\limits_{k=1}^m [M_{\{\Upsilon^{\xi,j}\otimes \bar{\Upsilon}^{\xi,j}\}} (R_{jk}^\da \check{g}_{jk} \check{\gamma}_{jk})](z) \\
& + \delta_2 \displaystyle \sum\limits_{j=n_1+1}^{n_1}  \displaystyle\sum\limits_{k=1}^m [M_{\{\Upsilon^{\xi,j}\otimes \bar{\Upsilon}^{\xi,j}\}} (R_{jk}^\da \check{g}_{jk} \check{\gamma}_{jk})](z)
\end{aligned}
\end{equation}
(compare with \eqref{newv} and split the sum for $1 \leq j \leq n_1$ and $n_1+1 \leq j \leq n_1+n_2$).

Remembering that $||| f(x_1,x_2)|||_{M^1_{x_1x_2}} \leq \Big\| ||| f(x_1,x_2) |||_{M^1_{x_1}} \Big\|_{L^1_{x_2}}$ we estimate for fixed $j=1,\ldots,N$ as follows:
$$
\begin{aligned}
\delta_1^{n_1} \delta_2^{n_2} & \left|\left|\left| \displaystyle\sum\limits_{k=1}^m [M_{\{\Upsilon^{\xi,j}\otimes \bar{\Upsilon}^{\xi,j}\}}  (R_{jk}^\da \check{g}_{jk} \check{\gamma}_{jk})](z) \right|\right|\right|_{M^1(\check\Omega_r)} \\ 
\leq  & \; C_m \, \delta_1^{n_1} \delta_2^{n_2} \displaystyle\sum\limits_{k=1}^m
\left|\left|\left| M_{\{\Upsilon^{\xi,j}\}} (R_{jk}^\da \check{g}_{jk} ) \right|\right|\right|_{M^1(\check B^1_r)}
\left\| M_{\{\bar{\Upsilon}^{\xi,j}\}} \check{\gamma}_{jk} \right\|_{L^1(\check B ^2_r)} \\
\leq & \;  C_m [\La^{n_2}(\check B_r^2)]^{1-1/p} \, \delta_1^{n_1} \delta_2^{n_2} \displaystyle\sum\limits_{k=1}^m
\left\| \check{g}_{jk} \right\|_{\IM(\IR^{n_1})} \left\| M_{\{\bar{\Upsilon}^{\xi,j}\}} \check{\gamma}_{jk} \right\|_{L^p(\check B ^2_r)} \\
\leq & \; C_{r,p,m} \, \delta_2^{-n_2 + n_2/p} \, \delta_1^{n_1} \delta_2^{n_2} \displaystyle\sum\limits_{k=1}^m
\left\| \check{g}_{jk} \right\|_{\IM(\IR^{n_1})} \left\|  \check{\gamma}_{jk} \right\|_{L^p(\IR^{n_2})} \\
= & \; C_{r,p,m} \displaystyle\sum\limits_{k=1}^m
\left\|  {g}_{jk} \right\|_{\IM(\IR^{n_1})} \left\| {\gamma}_{jk} \right\|_{L^p(\IR^{n_2})} \,.
\end{aligned}
$$
In the above chain of inequalities we have used the fact that the norm of $R_{jk}^\da$ as singular integral operator coincides with the norm of $R_{jk}$ as singular integral operator. 

Recalling \eqref{Vsplit} we immediately obtain the first inequality claimed in the lemma. The second one follows with a simile argument, using the continuity if the operator 
$$
\check{g}_{jk} \mapsto R_{jk}^\da \check{g}_{jk}
$$
from $L^p(\IR^{n_1})$ into itself.
\end{proof}
  
\section{The fundamental estimate for flows: main theorem and corollaries}\label{s:fund}

Our main theorem is the following:

\begin{theorem}\label{fund}
Let $b$ and $\bar{b}$ be two vector fields satisfying assumption {\bf (R1)}, and assume that $b$ also satisfies assumptions {\bf (R2)} and {\bf(R3)}. Fix $t\in[0,T)$ and let $X$ and $\bar{X}$ be regular Lagrangian flows starting at time $t$ associated to $b$ and $\bar{b}$ respectively, with compressibility constants $L$ and $\bar{L}$.
Then the following holds. For every $\gamma>0$ and $r>0$ and for every $\eta>0$ there exist $\lambda>0$  and $C_{\gamma,r,\eta}>0$ such that
$$\La^n\left(B_r\cap \{|X(s,\cdot)-\bar{X}(s,\cdot)|>\gamma\}\right)\leq C_{\gamma,r,\eta} ||b-\bar{b}||_{L^1((0,T)\times B_\la)}+\eta$$ for all $s\in[t,T]$. 
The constants $\lambda$ and $C_{\gamma,r,\eta}$ also depend on: 
\begin{itemize}
\item The equi-integrability in $L^1((0,T);L^1(\IR^{n_1}))$ of $\pp$, $\qq$, $\rr$, as well as the norm in $L^1((0,T); \IM(\IR^{n_1}))$ of $\mm$ (where $\pp$, $\qq$, $\rr$ and $\mm$ are associated to $b$ as in {\bf (R2)}),
\item The norms of the singular integral operators $S^{\cdot i}_{jk}$, as well as the norms in $L^\infty((0,T); L^q(\IR^{n_2}))$ of $\gamma^{\cdot i}_{jk}$ (associated to $b$ as in {\bf (R2)})),
\item The norm in $L^p((0,T)\times B_\lambda)$ of $b$,
\item The $L^1((0,T);L^1(\irn)) + L^1((0,T);L^\infty(\irn))$ norms of the decompositions of $b$ and $\bar{b}$ as in {\bf (R1)},
\item The compressibility constants $L$ and $\bar{L}$. 
\end{itemize}
\end{theorem}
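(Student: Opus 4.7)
The plan is to extend the Crippa--De Lellis--Bouchut scheme of \cite{crippade1,jhde} to the anisotropic functional $\Phi_{\da,\db}$ of \eqref{e:anfunctional}, combining the difference quotient estimate of Lemma~\ref{lemma1}, the operator bounds of Lemma~\ref{lemma2part2}, and equi-integrability. The first step is to use Lemma~\ref{estsuper} to fix $\lambda$ so large that $\La^N(B_r \setminus (G_\lambda \cap \wbar G_\lambda)) \le \eta/2$, restricting attention to $B_r \cap G_\lambda \cap \wbar G_\lambda$. Since $\da \le \db$ forces $|A^{-1}v| \ge |v|/\db$, one gets the elementary lower bound
$$
\Phi_{\da,\db}(s) \;\ge\; \log\!\Bigl(1+\tfrac{\gamma}{\db}\Bigr)\, \La^N\!\bigl(B_r \cap G_\lambda \cap \wbar G_\lambda \cap \{|X(s,\cdot)-\wbar X(s,\cdot)|\ge\gamma\}\bigr),
$$
so the theorem reduces to producing a matching upper bound on $\Phi_{\da,\db}$.

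Differentiating $\Phi_{\da,\db}$ in time and splitting $b(s,X)-\bar b(s,\wbar X) = [b(s,X)-b(s,\wbar X)] + [b(s,\wbar X)-\bar b(s,\wbar X)]$ isolates two contributions. The stability piece is controlled via $|A^{-1}w|\le|w|/\da$ and the compressibility $\wbar L$ of $\wbar X$, contributing $\wbar L\,\|b-\bar b\|_{L^1((0,T)\times B_\lambda)}/\da$ after time integration -- this produces the first summand in the conclusion. For the remaining piece, applying Lemma~\ref{lemma1} component-wise to $b_1$ and $b_2$ yields nonnegative $U_1(s,\cdot),\,U_2(s,\cdot)$ such that $|b_i(s,x)-b_i(s,y)| \le |A^{-1}[x-y]|\bigl(U_i(s,x)+U_i(s,y)\bigr)$ for $i=1,2$; setting $W := U_1/\da + U_2/\db$,
$$
\frac{|A^{-1}[b(s,X)-b(s,\wbar X)]|}{1+|A^{-1}[X-\wbar X]|} \;\le\; W(s,X)+W(s,\wbar X).
$$
Lemma~\ref{lemma2part2}, together with the $L^q$ bounds on the $\gamma$ coefficients from \textbf{(R2)}, then gives
$$
|||W(s,\cdot)|||_{M^1} \;\lesssim\; \tfrac{\da}{\db}\|D_1 b_2(s,\cdot)\|_{\IM} + \tfrac{\db}{\da}\|D_2 b_1(s,\cdot)\|_{L^1} + \|D_1 b_1(s,\cdot)\|_{L^1} + \|D_2 b_2(s,\cdot)\|_{L^1}.
$$

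Since $W$ is only weakly integrable, I would couple this $M^1$ estimate with the trivial pointwise bound $\frac{|A^{-1}[b(s,X)-b(s,\wbar X)]|}{1+|A^{-1}[X-\wbar X]|} \le \frac{|b(s,X)|+|b(s,\wbar X)|}{\da}$, whose $L^p(B_r)$ norm is dominated by $(L^{1/p}+\wbar L^{1/p})\|b(s,\cdot)\|_{L^p(B_\lambda)}/\da$ thanks to \textbf{(R3)} and the compressibility of $X,\wbar X$. Taking the minimum of the two bounds and applying the $M^1$--$M^p$ interpolation of Lemma~\ref{interpollemma} on $B_r$ produces, after time integration, an estimate of the shape
$$
\Phi_{\da,\db}(s) \;\lesssim\; \Bigl[\tfrac{\da}{\db}\|D_1 b_2\|_{\IM} + \tfrac{\db}{\da}\|D_2 b_1\|_{L^1} + \|D_1 b_1\|_{L^1} + \|D_2 b_2\|_{L^1}\Bigr]\log\!\tfrac{1}{\da} + \tfrac{\wbar L}{\da}\|b-\bar b\|_{L^1},
$$
with all spatial norms understood integrated in time on $(0,T)$.

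Comparing with the lower bound and dividing by $\log(1+\gamma/\db)$, the two logarithms are of the same order, and the remaining task is to make the square bracket arbitrarily small; this is the main obstacle. The $\|D_1 b_2\|_{\IM}$ term is killed by the prefactor $\da/\db$ in the regime $\da\ll\db$, but then $\tfrac{\db}{\da}\|D_2 b_1\|_{L^1}$ threatens to blow up, and the measure term $\mm$ does not enjoy equi-integrability. The remedy, as in \cite{jhde}, is equi-integrability (Lemma~\ref{equi}): writing each of $\pp,\qq,\rr$ as an $L^1$-small piece plus an $L^p$-bounded piece, the small pieces are used to beat the $\db/\da$ coefficient through the $M^1$ side of Lemma~\ref{lemma2part2}, while the $L^p$ pieces are absorbed via the strong bound of Lemma~\ref{lemma2part2} coupled with the $L^p$-branch of Lemma~\ref{interpollemma}. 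A careful choice of $\lambda$, then of the equi-integrability parameter, and finally of $\db$ and $\da$ in terms of $\gamma, r, \eta$ yields the claimed inequality.
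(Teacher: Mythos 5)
Your proposal is correct and follows essentially the same route as the paper's proof: you differentiate the anisotropic functional $\Phi_{\da,\db}$, isolate the stability piece $\wbar L\|b-\bar b\|_{L^1}/\da$, estimate the difference quotients via Lemma~\ref{lemma1}, bound the resulting operators with Lemma~\ref{lemma2part2} in $M^1$ (and, via \textbf{(R3)} plus compressibility, in $L^p$), interpolate with Lemma~\ref{interpollemma}, and then invoke equi-integrability (Lemma~\ref{equi}) so that the only non-smallable term $\|\mm\|_{\IM}$ is beaten by the ratio $\da/\db$. This is exactly the paper's scheme. The one place where you should be more careful is the order in which the parameters are fixed at the very end. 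You propose to choose $\lambda$, then the equi-integrability parameter $\var$, then $\db,\da$; but the bound contains a term of order $\var/\alpha$ (with $\alpha=\da/\db$), so if $\var$ is frozen before $\alpha$, you may not be able to make this term small while simultaneously shrinking $\alpha$ to handle $\|\mm\|_\IM$. The paper resolves this by choosing $\lambda$, then $\alpha$ (to kill the measure contribution $\alpha\|\mm\|$), then $\var<\alpha^2$ (so that $\var/\alpha\le\alpha$), and only then sends $\db\to 0$ with the ratio $\alpha$ fixed. A second cosmetic difference: the paper splits the minimum into five pieces \emph{before} interpolating, and the $L^q$-bounded pieces ($U^2_\pp,U^2_\qq,U^2_\rr$) are estimated directly in $L^1$ on the bounded domain rather than via interpolation, whereas you propose running all pieces through the interpolation lemma; both routes close.
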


From this fundamental estimate, the various corollaries regarding the well posedness of the regular Lagrangian flow and of Lagrangian solutions to the continuity and transport equations follow with the same proofs as in Sections~6 and 7 in \cite{jhde}. In particular, we obtain:
\begin{itemize}
\item Uniqueness of the regular Lagrangian flow associated to a vector field satisfying {\bf (R1)}, {\bf (R2)} and {\bf(R3)},
\item Stability (with an explicit rate) for a sequence $X_n$ of regular Lagrangian flows associated to vector fields $b_n$, that converge in $L^1_{\loc}([0,T] \times \IR^N)$ to a vector field satisfying {\bf (R1)}, {\bf (R2)} and {\bf(R3)}, under the assumption that the decompositions of $b_n$ in {\bf (R1)} and the compressibility constants of $X_n$ satisfy uniform bounds,
\item Compactness for a sequence $X_n$ of regular Lagrangian flows associated to vector fields $b_n$ satisfying {\bf (R1)}, {\bf (R2)} and {\bf(R3)} with suitable uniform bounds,
\item Existence of a regular Lagrangian flow associated to a vector field satisfying {\bf (R1)}, {\bf (R2)} and {\bf(R3)} and such that $[{\rm div} \, b]^- \in L^1((0,T);L^\infty(\IR^N))$,
\item If a vector field $b$ satisfies {\bf (R1)}, {\bf (R2)} and {\bf(R3)} and ${\rm div} \, b \in L^1((0,T);L^\infty(\IR^N))$, then there exists a unique forward and backward regular Lagrangian flow associated to $b$, which satisfies the usual group property, and the Jacobian of the flow is well defined,
\item Lagrangian solutions to the continuity and transport equations with a vector field $b$ satisfying {\bf (R1)}, {\bf (R2)} and {\bf(R3)} and ${\rm div} \, b \in L^1((0,T);L^\infty(\IR^N))$ are well defined and stable.
\end{itemize}

\section{Proof of the fundamental estimate (Theorem~\ref{fund})} 

The proof of Theorem~\ref{fund} makes use of the integral functional
$$
\Phi_{\da,\db}(s)=\int\limits_{B_r\cap G_\la \cap \wbar{G}_\la}
 \log\left(1+ \left| A^{-1}\left[X(s,\x,\y)-\wbar{X}(s,\x,\y)\right] \right| \right) \, dx
$$
already defined in \eqref{e:anfunctional}. In the following proof we assume $\da\leq \db$.

In order to improve the readability of the following (many) estimates, we will use the notation ``$\lesssim$'' to denote an estimate up to a constant only depending on absolute constants and on the bounds assumed in Theorem~\ref{fund}, and the notation ``$\lesssim_\lambda$'' to mean that the constant could also depend on the truncation parameter $\lambda$. We will however write explicitly the norm of the measure $\mm$, in order to make the reader aware of its role in the estimates.

\subsubsection*{Step 1: Differentiating $\Phi_{\da,\db}$} We start by differentiating the integral functional with respect to time:
$$
\Phi_{\da,\db}'(s)\leq \int\limits_{B_r\cap G_\la \cap \wbar{G}_\la} \frac{|A^{-1}[b(s,X(s,\x,\y))-\wbar{b}(s,\wbar{X}(s,\x,\y))]|}{1+|A^{-1}[X(s,\x,\y)-\wbar{X}(s,\x,\y)]|} dx \,.
$$
For simplicity, we drop the notation $X (s,\x,\y)$, setting $X(s,\x,\y)=X$ and $\widebar{X}(s,\x,\y) = \widebar{X}$. We estimate
$$
\Phi_{\da,\db}'(s)  \leq       
 \int\limits_{B_r\cap G_\la \cap \wbar{G}_\la} |A^{-1}[b(s,\wbar{X})-\wbar{b}(s,\wbar{X})]|dx + 
 \int\limits_{B_r\cap G_\la \cap \wbar{G}_\la} \frac{|A^{-1}[b(s,X)-{b}(s,\wbar{X})]|}{1+|A^{-1}[X-\wbar{X}]|} dx \,.
$$
After a change in variable along the flow $\wbar{X}$ in the first integral, and noting that $\da\leq\db$, we further obtain
\begin{equation}\label{eq:it}
\begin{aligned}
\Phi_{\da,\db}'(s) \leq \frac{\wbar{L}}{\da} & ||b(s,\cdot)-\wbar{b}(s,\cdot)||_{L^1(B_\lambda)}   \\
&+ \int\limits_{B_r\cap G_\la \cap \wbar{G}_\la} \min\left\{|A^{-1}[b(s,X)-{b}(s,\wbar{X})]|,\frac{|A^{-1}[b(s,X)-{b}(s,\wbar{X})]|}{| A^{-1}[X-\wbar{X}]|}\right\} dx \,.
\end{aligned}
\end{equation}  

\subsubsection*{Step 2: Decomposing the minimum} 
 We consider the second element of the minimum. We have 
$$
A^{-1}[b(s,X)-{b}(s,\wbar{X})]=\left(\frac{b_1(s,X)-b_1(s,\wbar{X})}{\da},\frac{b_2(s,X)-b_2(s,\wbar{X})}{\db}\right) \,,
$$
and therefore
\begin{equation}\label{eq:1}
\frac{|A^{-1}[b(s,X)-{b}(s,\wbar{X})]|}{| A^{-1}[X-\wbar{X}]|}\lesssim \frac{1}{\da}\frac{|b_1(s,X)-b_1(s,\wbar{X})|}{|A^{-1}[X-\wbar{X}]|}+\frac{1}{\db}\frac{|b_2(s,X)-b_2(s,\wbar{X})|}{|A^{-1}[X-\wbar{X}]|} \,.
\end{equation}

\subsubsection*{Step 3: Definition of the functions $U_\pp$, $U_\qq$, $U_\mm$ and $U_\rr$}

We aim at estimating the difference quotients in \eqref{eq:1}. We apply Lemma~\ref{lemma1} and (with a slight extension of the notation) we obtain that
$$
\frac{|b_1(s,x)-b_1(s,\bar{x})|}{|A^{-1}[x-\bar{x}]|} \leq \IU (S^1,S^2,\gamma^1,\gamma^2,\pp,\qq)(x)
+ \IU (S^1,S^2,\gamma^1,\gamma^2,\pp,\qq)(\bar{x}) =: U_{\pp,\qq}(x) + U_{\pp,\qq}(\bar{x})
$$
and
$$
\frac{|b_2(s,x)-b_2(s,\bar{x})|}{|A^{-1}[x-\bar{x}]|} \leq \IU (S^3,S^4,\gamma^3,\gamma^4,\mm,\rr)(x)
+ \IU (S^3,S^4,\gamma^3,\gamma^4,\mm,\rr)(\bar{x}) =: U_{\mm,\rr}(x) + U_{\mm,\rr}(\bar{x})
$$
for a.e.~$x$ and $\bar{x} \in \IR^N$ and $s \in [t,T]$.

It is immediate from the definition of the operator $\IU$ that it is subadditive in its entries. Therefore we can further estimate
$$
U_{\pp,\qq}(x) = \IU (S^1,S^2,\gamma^1,\gamma^2,\pp,\qq)(x)
\leq \IU (S^1,\gamma^1,\pp)(x) + \IU (S^2,\gamma^2,\qq)(x)
=: U_\pp(x) + U_\qq(x)
$$
and
$$
U_{\mm,\rr}(x) = \IU (S^3,S^4,\gamma^3,\gamma^4,\mm,\rr)(x)
\leq \IU (S^3,\gamma^3,\mm)(x) + \IU (S^4,\gamma^4,\rr)(x)
=: U_\mm(x) + U_\rr(x)
$$
for a.e.~$x \in \IR^N$, implying that
\begin{equation}\label{estpq}
\frac{|b_1(s,x)-b_1(s,\bar{x})|}{|A^{-1}[x-\bar{x}]|} \leq U_\pp(x) + U_\qq(x) + U_\pp(\bar{x}) + U_\qq(\bar{x})
\end{equation}
and
\begin{equation}\label{estmr}
\frac{|b_2(s,x)-b_2(s,\bar{x})|}{|A^{-1}[x-\bar{x}]|} \leq U_\mm(x) + U_\rr(x) + U_\mm(\bar{x}) + U_\rr(\bar{x})
\end{equation}
for a.e.~$x$ and $\bar{x} \in \IR^N$ and $s \in [t,T]$.

\subsubsection*{Step 4. Splitting of the quotient}
Let $\Omega = (t,\tau) \times B_r \cap G_\la \cap \wbar{G}_\la \subset \IR^{N+1}$. We return to the estimate in ~\eqref{eq:it} of Step 1. For any $\tau\in[t,T]$ we integrate this expression over $s\in(t,\tau)$, and recall ~\eqref{eq:1} to get 
\begin{equation}
\begin{aligned}
\Phi_{\da,\db} (\tau)  & \lesssim \frac{\wbar{L}}{\da}||b(s,\cdot)-\wbar{b}(s,\cdot)||_{L^1((t,\tau) \times B_\lambda)} \\
& \qquad + \int_\Omega \min\left\{|A^{-1}[b(s,X)-{b}(s,\wbar{X})]| , \frac{1}{\da}\frac{|b_1(s,X)-b_1(s,\wbar{X})|}{|A^{-1}[X-\wbar{X}]|}+   \frac{1}{\db}\frac{|b_2(s,X)-b_2(s,\wbar{X})|}{|A^{-1}[X-\wbar{X}]|} \right\} dxds \\
& = \frac{\wbar{L}}{\da}||b(s,\cdot)-\wbar{b}(s,\cdot)||_{L^1((t,\tau) \times B_\lambda)} + \widetilde{\Phi}_{\da,\db} (\tau) \,.
\end{aligned}
\end{equation}
We analyze the term $\widetilde{\Phi}_{\da,\db} (\tau)$. Using the estimates in \eqref{estpq} and \eqref{estmr} in Step 3, we can write
\begin{equation} \label{eq:decompmdc}
\begin{aligned} 
\widetilde{\Phi}_{\da,\db} (\tau) 
& \lesssim  \int_\Omega \min\left\{|A^{-1}[b(s,X)-{b}(s,\wbar{X})]|, \frac{1}{\da}\frac{|b_1(s,X)-b_1(s,\wbar{X})|}{|A^{-1}[X-\wbar{X}]|} \right\} dx ds \\
& \qquad  +  \int_\Omega \min\left\{|A^{-1}[b(s,X)-{b}(s,\wbar{X})]|, \frac{1}{\db}\frac{|b_2(s,X)-b_2(s,\wbar{X})|}{|A^{-1}[X-\wbar{X}]|} \right\} dx ds \\
& \leq \int_\Omega \min\left\{|A^{-1}[b(s,X)-{b}(s,\wbar{X})]|, \frac{1}{\da}\left( ( {U}_\pp+ {U}_\qq) (s, X)+( {U}_\pp + {U}_\qq) (s, \wbar{X})\right)\right\}  dxds \\
& \qquad + \int_\Omega \min\left\{|A^{-1}[b(s,X)-{b}(s,\wbar{X})]|, \frac{1}{\db}\left( ( {U}_\mm+ {U}_\rr)(s, X)+(( {U}_\mm+ {U}_\rr)) (s, \wbar{X})\right)\right\} dxds \,.
\end{aligned}
\end{equation}

\subsubsection*{Step 5. Decomposition of the functions $U_\pp$, $U_\qq$ and $U_\rr$}
We further decompose the functions $U_\pp$, $U_\qq$ and $U_\rr$ exploiting the equi-integrability of $\pp$, $\qq$ and $\rr$. 

We apply the equi-integrability Lemma~\ref{equi} in $L^1+L^q$, with the same $1<q\leq\infty$ as in the assumption on the functions $\gamma$ in {\bf (R2)}. Given $\var>0$, we find $C_\var>0$, a Borel set $A_\var \subset (0,T) \times \IR^{n_1}$ with finite measure and decompositions
$$
\pp^i_{jk} = (\pp^i_{jk})^1 + (\pp^i_{jk})^2 =: \pp^1 + \pp^2 \,,
$$
$$
\qq^i_{jk} = (\qq^i_{jk})^1 + (\qq^i_{jk})^2 =: \qq^1 + \qq^2 
$$
and
$$
\rr^i_{jk} = (\rr^i_{jk})^1 + (\rr^i_{jk})^2 =: \rr^1 + \rr^2 \,,
$$
so that
$$
\| \pp^1 \|_{L^1((0,T)\times\IR^{n_1})} \leq \var \,, \quad
\| \qq^1 \|_{L^1((0,T)\times\IR^{n_1})} \leq \var \,, \quad
\| \rr^1 \|_{L^1((0,T)\times\IR^{n_1})}  \leq \var \,,
$$
$$
\| \pp^2 \|_{L^q((0,T)\times\IR^{n_1})} \leq C_\var \,, \quad
\| \qq^2 \|_{L^q((0,T)\times\IR^{n_1})} \leq C_\var \,, \quad
\| \rr^2 \|_{L^q((0,T)\times\IR^{n_1})}  \leq C_\var \,,
$$
and
$$
{\rm spt} \, ( \pp^2) \subset A_\var \,, \quad
{\rm spt} \, ( \qq^2) \subset A_\var \,, \quad
{\rm spt} \, ( \rr^2) \subset A_\var \,.
$$

We then decompose the functions $U_\pp$, $U_\qq$ and $U_\rr$ from Step 3 as
$$
U_\pp = \IU(S^1,\gamma^1,\pp) \leq \IU(S^1,\gamma^1,\pp^1) + \IU (S^1,\gamma^1,\pp^2) =: U_\pp^1 + U_\pp^2 \,,
$$
$$
U_\qq = \IU(S^2,\gamma^2,\qq) \leq \IU(S^2,\gamma^2,\qq^1) + \IU (S^2,\gamma^2,\qq^2) =: U_\qq^1 + U_\qq^2 
$$
and
$$
U_\rr = \IU(S^4,\gamma^4,\rr) \leq \IU(S^4,\gamma^4,\rr^1) + \IU (S^4,\gamma^4,\rr^2) =: U_\rr^1 + U_\rr^2 \,.
$$

Applying Lemma~\ref{lemma2part2} to $U^1_\pp$ and $U^2_\pp$ we get
\begin{equation}\label{4} 
\begin{aligned}
||| {U}_{\pp}^1|||_{M^1((0,T) \times B_\lambda)} & \; \lesssim_\lambda \; \da ||\gamma^1||_{L^\infty((0,T); L^q(\IR^{n_2}))} ||\pp^1||_{ L^1((0,T)\times\IR^{n_1}))} \; \lesssim_\lambda \; \da  \var  \,, \\
\| {U}_{\pp}^2 \|_{L^q((0,T) \times B_\lambda)} & \; \lesssim \; \da ||\gamma^1||_{L^\infty((0,T); L^q(\IR^{n_2}))} ||\pp^2||_{L^q((0,T) \times \IR^{n_1} )}  
\; \lesssim \; \da  C_\varepsilon  \,.
\end{aligned}
\end{equation}
We have a similar estimate for $U_{\qq}$ and $U_{\rr}$:
\begin{equation}\label{5}
\begin{aligned}
|||{U}_{\qq}^1|||_{M^1((0,T) \times B_\lambda)} & \lesssim_\lambda  \db\var \,, \qquad
||| {U}_{\rr}^1|||_{M^1((0,T) \times B_\lambda)}   \lesssim_\lambda \db\var \,, \\
\| {U}_{\qq}^2\|_{L^q((0,T) \times B_\lambda)} & \lesssim \db C_\varepsilon \,, \qquad
\| {U}_{\rr}^2\| _{L^q((0,T) \times B_\lambda)}   \lesssim \db C_\varepsilon  \,.
\end{aligned}
\end{equation}

Note that we cannot apply such a decomposition to $ {U}_{\mm}$, since it is defined as the operator $ {\IU}$ acting on a measure rather than integrable function. We only have the bound
\begin{equation}\label{stimam}
||| {U}_{\mm}|||_{M^1((0,T) \times B_\lambda)} \lesssim_\lambda \da ||\mm||_{L^1((0,T);\IM(\IR^{n_1}))} \,.
\end{equation}

We further split the minima according to this decomposition:
\begin{equation}\label{summands}
\begin{aligned}
\widetilde{\Phi}_{\da,\db} (\tau) \lesssim  \int_\Omega \min\left\{|A^{-1}[b(s,X)-{b}(s,\wbar{X})]|, \frac{1}{\db}( {U}_{\mm}(s, X)+ {U}_{\mm}(s, \wbar{X}))\right\} dx ds \\
 +  \int_\Omega \min\left\{|A^{-1}[b(s,X)-{b}(s,\wbar{X})]|, \frac{1}{\db}(   {U}_{\rr}^1(s, X)  + {U}_{\rr}^1  (s, \wbar{X}) )\right\} dx ds 
 \\ +\int_\Omega \min\left\{|A^{-1}[b(s,X)-{b}(s,\wbar{X})]|, \frac{1}{\db}(   {U}_{\rr}^2(s,X)   + {U}_{\rr}^2  (s,\wbar{X}))\right\} dx ds \\
  + \int_\Omega \min \left\{ |A^{-1}[b(s,X) -{b}(s,\wbar{X})]|,  \frac{1}{\da}(   ( {U}_{\pp}^1+  {U}_{\qq}^1)(s,{X})+( {U}_{\pp}^1+ {U}_{\qq}^1)(s, \wbar{X}) )\right\} dx ds \\
  +\int_\Omega \min\left\{|A^{-1}[b(s,X)-{b}(s,\wbar{X})]|, \frac{1}{\da}(   ( {U}_{\pp}^2+ {U}_{\qq}^2)(s, {X})+( {U}_{\pp}^2+ {U}_{\qq}^2)(s, \wbar{X}) )\right\} dx ds \\
   = \int_\Omega\varp_1(s,X,\wbar{X})+  \int_\Omega\varp_2(s,X,\wbar{X})+\int_\Omega\varp_3(s,X,\wbar{X})+\int_\Omega\varp_4(s,X,\wbar{X})+ \int_\Omega\varp_5 (s,X,\wbar{X}) \,.
 \end{aligned} 
 \end{equation}
 
\subsubsection*{Step 6. Estimating the functions $\varphi_j$}
 
Let $\Omega'=(t,\tau)\times B_{\la}\subset\IR^{N+1}$. We estimate the first element of each minima in $L^p$: changing variables along the flows we obtain
\begin{equation} \label{min1}
\| \varphi_j(s,X,\wbar{X})\|_{L^p(\Omega)} \leq 
\frac{L^{1/p} + \wbar{L}^{1/p}}{\da} \|b\|_{L^p(\Omega')} 
\; \lesssim \; \frac{1}{\da}
\end{equation}
for every $j=1,\ldots,5$.

We now consider the second elements of the minima. Let us start with $\varphi_1$. Changing variable along the flows and using \eqref{stimam} we obtain
\begin{equation}\label{eq:phi1}
\begin{aligned}
|||\varphi_1(s,X,\wbar{X})|||_{M^1(\Omega)} 
& \leq \frac{1}{\delta_2}\left|\left|\left| {U}_{\mm}(s, X)+ {U}_{\mm}(s,\wbar{X})\right|\right|\right|_{M^1(\Omega)} \\
& \lesssim  \frac{1}{\db}   |||U_\mm|||_{M^1(\Omega')} 
\; \lesssim_\lambda \frac{\da}{\db} ||\mm||_{L^1((0,T);\IM(\IR^{n_1}))} \,.
\end{aligned}
\end{equation}

Consider $\varp_2$. Using~\eqref{5} we obtain
\begin{equation}\label{eq:phi2}
\begin{aligned}
||| \varp_2(s,X,\wbar{X})|||_{M^1(\Omega)} 
& \leq \frac{1}{\db} ||| {U}_{\rr}^1(s, X) +{U}_{\rr}^1(s, \wbar{X}) |||_{M^1(\Omega)} \\
& \lesssim \frac{1}{\db}|||{U}_{\rr}^1|||_{M^1(\Omega')} \; \lesssim_\lambda \; \var \,. 
\end{aligned}
\end{equation}
 
For $\varp_3$ and $\varp_5$ we neglect the first element of the minimum, since we have directly an estimate on the $L^1(\Omega)$ norm. Using \eqref{5} we obtain
\begin{equation}\label{eq:phi3}
\begin{aligned}
\| \varp_3(s,X,\wbar{X})\|_{L^1(\Omega)} 
& \leq \frac{1}{\db} \|{U}_{\rr}^2(s, X)   +{U}_{\rr}^2  (s, \wbar{X})\|_{L^1(\Omega)} \\
& \lesssim \frac{1}{\db}||   {U}_{\rr}^2||_{L^1(\Omega')} 
\; \lesssim_\lambda \; C_\var \,.
\end{aligned}
\end{equation} 
Similarly, using~\eqref{4} and \eqref{5}, we estimate $\varp_5$ as follows:
\begin{equation}\label{eq:phi5}
\begin{aligned}
||\varp_5(s,X,\wbar{X})||_{L^1(\Omega)} 
& \leq\frac{1}{\da}||   ({U}_{\pp}^2+{U}_{\qq}^2)(s, {X})+({U}_{\pp}^2+{U}_{\qq}^2)(s, \wbar{X})||_{L^1(\Omega)} \\
&\lesssim \frac{1}{\da}||  ( {U}_{\pp}^2+{U}_{\qq}^2)||_{L^1(\Omega')} 
\; \lesssim_\lambda \; \frac{\db}{\da} C_\var \,.
\end{aligned}
\end{equation} 

Finally, using~\eqref{4} and \eqref{5}, we estimate $\varp_4$: 
\begin{equation}\label{eq:phi4}
\begin{aligned}
|||\varp_4 (s,X,\wbar{X}) |||_{M^1(\Omega)} 
& \leq\frac{1}{\da}|||({U}_{\pp}^1+ {U}_{\qq}^1)(s, {X})+({U}_{\pp}^1+{U}_{\qq}^1)(s, \wbar{X})|||_{M^1(\Omega)} \\
&\lesssim \frac{1}{\da} |||({U}_{\pp}^1+{U}_{\qq}^1)|||_{M^1(\Omega')} \\
& \lesssim_\lambda \frac{\da \var + \db \var}{\da} \; \lesssim_\lambda \; \frac{\db}{\da} \var \,.
\end{aligned}
\end{equation}

\subsubsection*{Step 7. Interpolation}
 
We now apply the Interpolation Lemma \ref{interpollemma} to estimate the $L^1(\Omega)$ norms of $\varphi_1$, $\varphi_2$ and $\varphi_4$.
 
Using \eqref{min1} and \eqref{eq:phi1} we obtain
\begin{equation}\label{eq:Phi1}
\| \varp_1(s,X,\wbar{X})\|_{L^1(\Omega)} 
\lesssim_\lambda
\frac{\da}{\db} \| \mm \| \left[ 1 + \log \left( \frac{\db}{\da^2 \|\mm \|} \right) \right] \,.
\end{equation}   

Proceeding similarly and using \eqref{min1}, \eqref{eq:phi2} and \eqref{eq:phi4} we obtain
\begin{equation}\label{eq:phi2,4}
||\varphi_2(s,X,\wbar{X})  ||_{L^1(\Omega)} 
\lesssim_\lambda \var \left[ 1 + \log \left( \frac{1}{\da \var} \right) \right] 
\end{equation}
and
\begin{equation}\label{eq:phi2,4bis}
||\varphi_4(s,X,\wbar{X}) ||_{L^1(\Omega)} 
\lesssim_\lambda \frac{\db}{\da} \var \left[ 1 + \log \left( \frac{1}{\db \var} \right) \right] \,.
\end{equation}

Finally, we sum all the terms in \eqref{summands}. Using ~\eqref{eq:Phi1},~\eqref{eq:phi2,4},~\eqref{eq:phi3},~\eqref{eq:phi2,4bis} and \eqref{eq:phi5}, and setting $\displaystyle\frac{\da}{\db}=\alpha $, we get:
\begin{equation}\label{eq:sphi}
\begin{aligned}
\Phi_{\da,\db}(\tau) & \lesssim_\lambda \frac{1}{\da}||b(s,\cdot)-\wbar{b}(s,\cdot)||_{L^1(B_\lambda\times(t,\tau))} \\
& \qquad + \alpha \| \mm \| \left[ 1 + \log \left( \frac{1}{\da \alpha \|\mm \|} \right) \right]
+ \var \left[ 1 + \log \left( \frac{1}{\da \var} \right) \right]
+ C_\var \\
& \qquad + \frac{\var}{\alpha} \left[ 1 + \log \left( \frac{1}{\db \var} \right) \right]
+ \frac{1}{\alpha} C_\var \,.
\end{aligned}
\end{equation}

\subsubsection*{Step 8. The final estimate}

By definition of $\Phi_{\da,\db}$, given $\gamma>0$ we estimate  
\begin{equation}
\begin{aligned}
\Phi_{\da,\db}(\tau) 
& \geq \int_{B_r\cap\{|X(\tau,x)-\wbar{X}(\tau,x)|>\gamma\}\cap G_\lambda\cap \wbar{G}_\lambda} \log\left(1+\frac{\gamma}{\db}\right) dx \\
& = \log\left(1+\frac{\gamma}{\db}\right)\La^N \Big( B_r\cap\{|X(\tau,x)-\wbar{X}(\tau,x)|>\gamma\}\cap G_\lambda\cap \wbar{G}_\lambda \Big) \,.
\end{aligned}
\end{equation}
This implies that
\begin{equation}\label{eq:lb}
\La^N(B_r\cap\{|X(\tau,x)-\wbar{X}(\tau,x)|>\gamma\})\leq \frac{\Phi_{\da,\db}(\tau)}{ \log\left(1+\frac{\gamma}{\db}\right) }+\La^N(B_r\setminus G_\lambda)+\La^N(B_r\setminus\wbar{G}_{\lambda}) \,.
\end{equation}
Combining~\eqref{eq:sphi} and~\eqref{eq:lb} we obtain
\begin{equation}
\begin{aligned}
\La^N  (B_r\cap & \{|X(\tau,x)-\wbar{X}(\tau,x)| >\gamma\}) \\
& \leq \; C_\lambda \, \Big\{ \frac{\frac{1}{\da} \| b -\wbar{b}\|_{L^1}}{\log\left(1+\frac{\gamma}{\db}\right) }
+ \frac{\alpha \| \mm \| \left[ 1 + \log \left( \frac{1}{\da \alpha \|\mm \|} \right) \right]}{\log\left(1+\frac{\gamma}{\db}\right)}
+ \frac{\var \left[ 1 + \log \left( \frac{1}{\da \var} \right) \right]}{\log\left(1+\frac{\gamma}{\db}\right)} \\
& \qquad \qquad \qquad \qquad + \frac{\frac{\var}{\alpha} \left[ 1 + \log \left( \frac{1}{\db \var} \right) \right]}{\log\left(1+\frac{\gamma}{\db}\right) }
+ \frac{\frac{1}{\alpha} C_\var}{\log\left(1+\frac{\gamma}{\db}\right) } 
+ \frac{C_\var} {\log\left(1+\frac{\gamma}{\db}\right) } \Big\} \\
& \qquad \qquad + \La^N(B_r\setminus G_\lambda)+ \La^N(B_r\setminus \wbar{G}_\lambda) \\
& =: 1)+2)+3)+4)+5)+6)+7)+8) \,.
     \end{aligned} 
\end{equation}
 
Fix $\eta>0$. By Lemma \ref{estsuper}, we can choose $\lambda>0$ large enough so that $7)+8)\leq 2\eta/7$. Choose $\alpha$ small enough so that $2)\leq \eta/7$. Then choose $\var <\alpha ^2$ small enough so that $3)+4)\leq 2\eta/7$, since these terms are uniformly bounded as $\da,\db\rightarrow 0$ and for all $\var>0$. 

Now $\lambda$ and $\var$ (and therefore $C_\var$) are fixed. Also $\alpha$ is fixed, but $\da$ and $\db$ are free to be chosen so long as the ratio equals $\alpha$. Hence, we now choose $\db$ small enough, in particular depending on $C_\var$, so that $5)+6)\leq 2\eta/7$. This fixes all parameters. 

Setting
$$
C_{\gamma,r,\eta}=\frac{C_\lambda}{\da\log(1+\frac{\gamma}{\db})}
$$
we have proven our statement. \hfill \qed

\end{document}